\newcommand{\R}{\mathbb{R}}
\newcommand{\s}{\mathbb{S}}
\newcommand{\h}{\mathbb{H}}
\newcommand{\E}{\mathbb{E}}
\newcommand{\M}{\mathbb{M}}
\newcommand{\Sol}{\mathrm{Sol}_3}
\newcommand{\X}{\mathfrak{X}}
\newcommand{\df}{\,\mathrm{d}}
\newcommand{\ad}{\mathrm{ad}}
\newtheorem{theorem}{Theorem}[section]
\newtheorem{proposition}[theorem]{Proposition}
\newtheorem{corollary}[theorem]{Corollary}
\newtheorem{lemma}[theorem]{Lemma}
\theoremstyle{definition}
\theoremstyle{remark}
\newtheorem{remark}[theorem]{Remark}
\numberwithin{equation}{section}
\title[Totally umbilical surfaces in homogeneous $3$-manifolds]{The classification of totally umbilical surfaces in homogeneous $3$-manifolds}
\author{Jos\'{e} M. Manzano}
\address{Departamento de Geometr\'{\i}a  y Topolog\'{\i}a, Universidad de Granada. Avda. Fuen\-tenueva s/n, 18071 Granada, Spain}
\email{jmmanzano@ugr.es}
\author{Rabah Souam}
\address{Institut de Math\'{e}matiques de Jussieu-Paris Rive Gauche,   UMR 7586, B\^{a}timent Sophie Germain,  Case 7012, 75205  Paris Cedex 13, France}
\email{souam@math.jussieu.fr}
\thanks{The first author was partially supported by the Spanish research project MTM2011-22547 and the Junta de Andaluc\'{i}a Grant P09-FQM-5088. The second author  was partially supported by the ANR-11-IS01-0002 grant.}
\subjclass[2010]{Primary 53C42; Secondary 53C30}
\keywords{}
\begin{document}
\maketitle

\begin{abstract}
We obtain an exhaustive classification of totally umbilical surfaces in unimodular and non-unimodular simply-connected $3$-dimen\-sional Lie groups endowed with arbitrary left-invariant Riemannian metrics. This completes the classification of totally umbilical surfaces in homogeneous Riemannian $3$-manifolds.
\end{abstract}

\section{Introduction}\label{sec:introduction}

A submanifold of a Riemannian manifold is said \emph{totally umbilical} if the second fundamental form is proportional to the induced metric. In the particular case the second fundamental form identically vanishes, then it is called \emph{totally geodesic}. Totally umbilical submanifolds play an important role in submanifold theory and have been extensively studied in many ambient manifolds (e.g., see~\cite{Chen,Chen2,MT,ST09,SV,Tsu96,Veken}). The main aim of this paper is to classify totally umbilical surfaces in homogeneous Riemannian $3$-manifolds (i.e., $3$-manifolds on which the isometry group acts transitively). Surface theory in homogeneous $3$-manifolds is currently a very active research topic and determining the totally umbilical surfaces in these spaces is a basic question in the theory.

In the study of homogeneous manifolds, Lie group theory appears in a very natural way, since any Lie group endowed with a left-invariant metric is homogenous. This is due to the fact that the left-invariance of the metric implies that left-translations are isometries. In the case of dimension $3$, the converse is almost true as the following result shows (cf.~\cite[Theorem~2.4]{MPR}):
\begin{quotation}
Any simply-connected homogeneous Riemannian $3$-mani\-fold is isometric to a $3$-dimensional Lie group endowed with a left-invariant metric, except for the product manifolds $\s^2(\kappa)\times\R$, where $\s^2(\kappa)$ stands for the $2$-sphere of constant Gaussian curvature $\kappa>0$.
\end{quotation}
Simply-connected $3$-dimensional Lie groups endowed with a left-invariant metric will be called \emph{metric Lie groups} in the sequel, and their isometry groups have dimension $3$, $4$ or $6$. We refer the reader to~\cite{MPR} for an exhaustive introduction to metric Lie groups (see also~\cite{Milnor} and Section~\ref{sec:metricliegroups}). 

We remark that non-simply-connected homogeneous $3$-manifolds are just Riemannian quotients of the simply-connected ones. As the projection over such a quotient defines a local isometry, totally umbilical surfaces in the quotient can be lifted to totally umbilical surfaces in the universal cover, so there will be no loss of generality in considering the ambient spaces to be simply-connected. More generally, conformal diffeomorphisms preserve totally umbilical surfaces, so rescaling the metric will not affect the discussion of totally umbilical surfaces, either.

We will now summarize some results on totally umbilical surfaces that are already known for some simply-connected homogeneous $3$-manifolds. 
\begin{itemize}
 \item If the isometry group has dimension $6$, then they have constant sectional curvature. In $\R^3$, totally umbilical surfaces are planes (totally geodesic) and round spheres. In $\s^3$, totally umbilical surfaces are round spheres (totally geodesic if and only if they are great spheres). Finally, in $\h^3$, totally umbilical surfaces are totally geodesic planes or their equidistant surfaces, round spheres, and horospheres (see also~\cite{Spivak}).
 \item Those whose isometry group has dimension $4$ are classified in a $2$-parameter family $\E(\kappa,\tau)$, where $\kappa,\tau\in\R$. In fact, non-negative constant sectional curvature spaces are also contained in this family for $\kappa-4\tau^2=0$ (see~\cite{Daniel} for a detailed description). $\E(\kappa,\tau)$-spaces are also characterized in~\cite{Man} as Killing submersions over $\M^2(\kappa)$ with constant bundle curvature $\tau$. 
\begin{itemize}
 \item If $\tau=0$, they reduce to the Riemannian product spaces $\s^2(\kappa)\times\R$ and $\h^2(\kappa)\times\R$. They are locally conformally $\R^3$, from where totally umbilical surfaces can be studied (see also~\cite{ST09}).
  \item If $\tau\neq 0$, then it is shown in~\cite{ST09} that these ambient spaces do not admit totally umbilical surfaces.
\end{itemize}
 \item Finally, if the dimension of the isometry group is $3$, little is known about totally umbilical surfaces, except for totally geodesic ones in the case the metric Lie group is unimodular, which have been classified in~\cite{Tsu96}, and totally umbilical ones in the $\Sol$ group endowed with its standard metric, which have been classified in~\cite{ST09}.
\end{itemize}

In this paper, we undertake the rest of metric Lie groups, which completes the classification in all homogenous $3$-manifolds. Metric Lie groups are divided into two families: unimodular (when its left-invariant Haar measure is also right-invariant) and non-unimodular ones.  The results in this paper are summarized as follows:
\begin{enumerate}
 \item Unimodular metric Lie groups form a $3$-parameter family which contains the family $\E(\kappa,\tau)$. In Section~\ref{sec:unimodular}, we will show that, except for $\R^3$, $\s^3$, $\Sol$, and the totally geodesic examples that appear in some special cases (given by~\cite{Tsu96}), there exist no totally umbilical surfaces (see Theorem~\ref{thm:unimodular}). In particular, we extend the corresponding results in~\cite{ST09,Tsu96} giving alternative proofs for them.
 \item Non-unimodular metric Lie groups also form a $3$-parameter family. In Section~\ref{sec:nounimodular}, we will prove a non-existence result of totally umbilical surfaces in non-unimodular metric Lie groups different from $\h^3$, $\h^2(\kappa)\times\R$, and some special cases which admit two families of totally geodesic surfaces and two families of totally umbilical which are not totally geodesic (see Theorem~\ref{thm:nounimodular}).
\end{enumerate}

Note that if the set of fixed points of a non trivial isometry of $G$ contains a surface, then this surface is totally geodesic. We remark that in  the exceptional families of totally geodesic surfaces appearing in the list above, those in the non-unimodular case are sets of fixed points of certain mirror symmetries, whereas in the unimodular case they are not (see also Example 2.23 and Proposition 2.24 in~\cite{MPR}). It is also interesting to point out that totally umbilical surfaces which are not totally geodesic only exist on those spaces which admit mirror symmetries (see also Remark~\ref{rmk:family-existence}). In fact, totally umbilical surfaces which are not totally geodesic turn out to be invariant by some mirror symmetry.

As a consequence, we get that the only $3$-dimensional homogeneous spaces which are locally conformally flat are those with constant sectional curvature and   the Riemannian product spaces $\h^2(\kappa)\times\R$ and $\s^2(\kappa)\times\R$ together with their Riemannian quotients.

Along the paper, totally umbilical surfaces are supposed to be smooth, though the involved arguments work when only $\mathcal C^3$-regularity is assumed.

\section{Preliminaries on metric Lie groups}\label{sec:metricliegroups}

As mentioned in the introduction, a \emph{metric Lie group} is a Lie group equipped with a left-invariant metric or, equivalently, a metric for which left-translations are isometries. In the sequel, we will suppose that all metric Lie groups are simply-connected.

A Lie group $G$ is called \emph{unimodular} if its left-invariant Haar measure is also right-invariant. It is well-known that $G$ is unimodular if, and only if, for any $X\in\mathfrak g$, the endomorphism $\ad_X:\mathfrak g\to\mathfrak g$ given by $\ad_X(Y)=[X,Y]$ has trace equal to zero (here, $\mathfrak g$ denotes the Lie algebra associated to $G$). On the other hand, both the cross product $\wedge$ and the Lie bracket $[\cdot,\cdot]$ are skew-symmetric operator defined on $\mathfrak g\times\mathfrak g$ so there exists a unique linear operator $L:\mathfrak g\to\mathfrak g$ such that
\begin{equation}\label{eqn:unimodular}
[X,Y]=L(X\wedge Y),\quad\text{for all }X,Y\in\mathfrak g.
\end{equation}
Thus $G$ is unimodular if and only if $L$ is self-adjoint~\cite[Lemma 4.1]{Milnor}. We will now discuss the unimodular and non-unimodular cases separately. 

\subsection{Unimodular metric Lie groups}
Let $G$ be a $3$-dimensional unimodular Lie group endowed with a left-invariant Riemannian metric $\langle\cdot,\cdot\rangle$. Since the operator $L$ given by~\eqref{eqn:unimodular} is self-adjoint, there exist a left-invariant orthonormal frame $\{E_1,E_2,E_3\}$ in $G$ and $c_1,c_2,c_3\in\R$ such that 
\begin{align}
[E_1,E_2]&=c_3E_3, &[E_2,E_3]&=c_1E_1,&[E_3,E_1]&=c_2E_2.\label{eqn:lie-bracket-unimodular-frame}
\end{align}
The constants $c_1,c_2,c_3\in\R$ determine  both the geometry and the underlying Lie group structure. Note that, if we change the sign of all the $c_i$, then the geometry of $G$ is preserved but its orientation is reversed so the aforementioned structure is invariant under a global change of signs of the constants. The list of underlying Lie groups is given by the figure~\ref{fig:unimodular-mlg}.

\begin{figure}
\begin{tabular}{|c|c|}
\hline Signs of $c_1$, $c_2$, $c_3$&Simply-connected Lie group\\\hline\hline
$+$, $+$, $+$&$\mathrm{SU}(2)$\\
$+$, $+$, $-$&$\widetilde{\mathrm{Sl}}_2(\R)$\\
$+$, $+$, $\ 0$&$\widetilde E(2)$\\
$+$, $-$, $\ 0$&$\mathrm{Sol}_3$\\
$+$, $\ 0$, $\ 0$&$\mathrm{Nil}_3$\\
$0$, $\,\,0$, $\,\,0$&$\R^3$\\\hline
\end{tabular}
\caption{Three-dimensional simply-connected unimodular metric Lie groups in terms of the signs of the structure constants.}\label{fig:unimodular-mlg}
\end{figure}

Although such Lie group classification only depends on the signs of the structure constants, their values determine all the left-invariant metrics they carry. In other words, the values of $c_1, c_2, c_3$ determine the metric structure of $G$. For instance, multiplying all the $c_i$ by a positive constant leads to a metric homothetical to the original one.

Let us now consider the real numbers $\mu_1,\mu_2,\mu_3$ given by 
\[\mu_1=\tfrac{1}{2}(-c_1+c_2+c_3),\quad\mu_2=\tfrac{1}{2}(c_1-c_2+c_3),\quad\mu_3=\tfrac{1}{2}(c_1+c_2-c_3).\]
By using the Koszul formula, it is easy to check that the Levi-Civita connection $\overline\nabla$ on $G$ satisfies
\begin{equation}\label{eqn:nabla-unimodular}
\begin{array}{lclcl}
\overline\nabla_{E_1}E_1=0,&&\overline\nabla_{E_1}E_2=\mu_1E_3,&&\overline\nabla_{E_1}E_3=-\mu_1E_2,\\
\overline\nabla_{E_2}E_1=-\mu_2E_3,&&\overline\nabla_{E_2}E_2=0,&&\overline\nabla_{E_2}E_3=\mu_2E_1,\\
\overline\nabla_{E_3}E_1=\mu_3E_2,&&\overline\nabla_{E_3}E_2=-\mu_3E_1,&&\overline\nabla_{E_3}E_3=0.
\end{array}
\end{equation}

\begin{remark}
The system of linear equations defining $\mu_i$ in terms of the $c_i$ is invertible so there is no loss of generality in studying unimodular metric Lie groups in terms of $\mu_i$. Nevertheless, the description of the underlying Riemannian $3$-manifolds is better understood by using $c_i$. We also remark that, given $i,j\in\{1,2,3\}$, the condition $c_i\leq c_j$ is equivalent to $\mu_j\leq\mu_i$, which will be useful in the sequel.
\end{remark}

From the expression of the Levi-Civita connection it is easy to compute the Riemannian curvature tensor $R$ of $G$, which satisfies
\[\begin{array}{l}
R(E_1,E_2)E_1=(\mu_1\mu_2-c_3\mu_3)E_2,\\
R(E_1,E_2)E_2=-(\mu_1\mu_2-c_3\mu_3)E_1,\\
R(E_1,E_2)E_3=0,\\
R(E_1,E_3)E_1=(\mu_1\mu_3-c_2\mu_2)E_3,\\
R(E_1,E_3)E_2=0,\\
R(E_1,E_3)E_3=-(\mu_1\mu_3-c_2\mu_2)E_1,\\
R(E_2,E_3)E_1=0,\\
R(E_2,E_3)E_2=(\mu_2\mu_3-c_1\mu_1)E_3,\\
R(E_2,E_3)E_3=-(\mu_2\mu_3-c_1\mu_1)E_2.
\end{array}\]

\begin{lemma}\label{lemma:R-unimodular}
In the previous situation,
\begin{align*}
R&=(\mu_2\mu_3-c_1\mu_1)R_1+(\mu_1\mu_3-c_2\mu_2)R_2+(\mu_1\mu_2-c_3\mu_3)R_3,
\end{align*}
where, for $i\in\{1,2,3\}$ and $X,Y,Z\in\X(G)$, the tensor $R_i$ is given by
\begin{align*}
R_i(X,Y)Z&=\langle X,Z\rangle Y-\langle Y,Z\rangle X-\langle Z,E_i\rangle\langle X,E_i\rangle Y\\&\quad+\langle Z,E_i\rangle\langle Y,E_i\rangle X-\langle Y,E_i\rangle\langle X,Z\rangle E_i+\langle X,E_i\rangle\langle Y,Z\rangle E_i.
\end{align*}
\end{lemma}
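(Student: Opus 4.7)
The plan is to verify the asserted identity as an equation of $(1,3)$-tensors by evaluating both sides on the left-invariant orthonormal frame $\{E_1, E_2, E_3\}$. Both $R$ and each $R_i$ are tensors and are skew-symmetric in their first two slots (the skew-symmetry of $R_i$ is immediate from its defining formula), so it suffices to check equality on the triples $(E_j, E_k, E_l)$ with $j<k$ and $l\in\{1,2,3\}$, i.e., on nine triples. The values of $R$ on such triples are already given in the display that precedes the lemma, so the real task is to compute $R_i(E_j, E_k)E_l$ explicitly.

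The key observation is that, since $\langle E_a, E_b\rangle = \delta_{ab}$, every inner product in the defining formula of $R_i$ collapses to a Kronecker delta and almost all terms cancel. A direct substitution will show two clean facts. First, $R_i(E_i, X)Y \equiv 0$ on the frame (the six terms pair up and cancel). Second, and as a consequence, $R_i(E_j, E_k)E_l = 0$ whenever $i \in \{j, k\}$ or $l \notin \{j, k\}$. In the only remaining case, namely $\{i, j, k\} = \{1, 2, 3\}$, the formula simplifies to
\begin{equation*}
R_i(E_j, E_k) E_j = E_k, \qquad R_i(E_j, E_k) E_k = -E_j.
\end{equation*}
In other words, each $R_i$ restricts to the ``constant curvature $1$'' curvature tensor of the plane orthogonal to $E_i$, and is identically zero as soon as one of its entries lies along $E_i$.

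With this computation in hand, the comparison is straightforward. For example, on the triple $(E_1, E_2, E_1)$ only the summand with $i=3$ contributes to the right-hand side (since the $i=1$ and $i=2$ pieces vanish by the property above), giving $(\mu_1\mu_2 - c_3\mu_3)\,E_2$, matching the listed value of $R(E_1, E_2)E_1$. The other nontrivial triples $(E_1, E_2, E_2)$, $(E_1, E_3, E_1)$, $(E_1, E_3, E_3)$, $(E_2, E_3, E_2)$, $(E_2, E_3, E_3)$ are analogous, and the triples for which $R(E_j, E_k)E_l = 0$ are matched by the right-hand side thanks to the vanishing property. I expect no genuine obstacle: the argument is purely a verification, and the main work is careful index bookkeeping. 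The only thing that makes the proof clean rather than tedious is the sharp vanishing behavior of $R_i$ on the frame, which reduces the entire check to reading off coefficients.
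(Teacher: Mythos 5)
Your proposal is correct and follows the same route as the paper: the paper's proof is precisely the remark that it suffices to check the identity on the frame $\{E_1,E_2,E_3\}$, which is the verification you carry out. Your explicit observation that $R_i$ vanishes whenever $E_i$ appears as an argument and acts as the constant-curvature-one tensor on the plane orthogonal to $E_i$ is just a cleanly organized version of that same straightforward computation.
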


\begin{proof}
It suffices to check that this tensor coincides with $R$ on the basis $\{E_1,E_2,E_3\}$, which is a straightforward computation.
\end{proof}

The scalar curvature of $G$ is constant and will be denoted by $\rho$. It can be computed by using Lemma~\ref{lemma:R-unimodular} as
\begin{equation}\label{eqn:rho-unimodular}
\rho=\sum_{i,j=1}^3\langle R(E_i,E_j)E_j,E_i\rangle=2(\mu_1\mu_2+\mu_1\mu_3+\mu_2\mu_3).
\end{equation}

\begin{remark}\label{rmk:unimodular-examples}
Let us analyze the case where two of the structure constants are equal, so we will assume $c_1=c_2$ without loss of generality. If $c_3\neq 0$, then there exist $\kappa,\tau\in\R$ such that $c_1=c_2=\frac{\kappa}{2\tau}$ and $c_3=2\tau$. It can be shown that the metric Lie group $G$ with structure constants $c_1,c_2,c_3$ is isometric to the space $\E(\kappa,\tau)$, whose isometry group has dimension $4$ or $6$ (see also~\cite{Daniel}). If, on the contrary, $c_3=0$, then $G$ is isomorphic to $\R^3$ for $c_1=c_2=0$ or isomorphic to $\widetilde E(2)$, the group of orientation-preserving rigid motions of the Euclidean plane $\R^2$, endowed with its standard metric, for $c_1=c_2\neq 0$. It is interesting to observe that $\widetilde E(2)$ and $\R^3$ are isometric but the underlying Lie group structures are not isomorphic.

We also remark that if the constants $c_1,c_2,c_3$ are different, then the isometry group of $G$ has dimension $3$. A special case is the $\Sol$ group with its standard metric, which is obtained for $c_1=1$, $c_2=0$, $c_3=-1$.
\end{remark}

\subsection{Non-unimodular metric Lie groups}\label{sec:nouni-intro}
A natural way to provide $3$-dimensional Lie groups is to consider semidirect products $\R^2\ltimes_A\R$, where $A$ is a $2\times 2$ real matrix. Such a group structure is given by 
\[(p_1,z_1)\star(p_2,z_2)=(p_1+e^{z_1A}p_2,z_1+z_2),\quad (p_1,z_1),(p_2,z_2)\in\R^2\times\R,\]
where $e^{zA}=\sum_{k=0}^\infty\frac{z^kA^k}{k!}$ denotes the exponential matrix.

Up to rescaling the metric, every non-unimodular metric Lie group is isometric to $\R^2\ltimes_{A(a,b)}\R$, where 
\begin{equation}\label{eqn:A-nonuni}
A(a,b)=\left(\begin{matrix}(1+a)&-(1-a)b\\(1+a)b&1-a\end{matrix}\right),
\end{equation}
for some constants $a,b\geq 0$, endowed with the left-invariant metric determined by the fact that
\begin{align*}
 E_1&=\alpha_{11}(z)\partial_x+\alpha_{21}(z)\partial_y,&E_2&=\alpha_{12}(z)\partial_x+\alpha_{22}(z)\partial_y,&E_3&=\partial_z,
\end{align*}
defines an orthonormal frame. Here, $\alpha_{ij}(z)$ denote the entries of $e^{zA}$ (see~\cite[Section 2.5]{MPR} for a proof of these properties).  We will call this metric the canonical metric associated to $a$ and $b$. Since rescaling the metric is a global conformal diffeomorphism, it will not affect our discussion of totally umbilical surfaces and hence we will use this $2$-parameter family of metric Lie groups as framework in the sequel.

The orthonormal reference $\{E_1,E_2,E_3\}$ is left-invariant and satisfies
\begin{align*}
 [E_1,E_2]&=0,\\
[E_2,E_3]&=(1-a)bE_1-(1-a)E_2,\\
[E_3,E_1]&=(1+a)E_1+(1+a)bE_2,
\end{align*}
so Koszul formula allows us to compute
\begin{equation}\label{eqn:nabla-nounim}
\begin{array}{lll}
\nabla_{E_1}E_1=(1+ a)E_3,&\nabla_{E_1}E_2=ab E_3,&\nabla_{E_1}E_3=-(1+a)E_1-ab E_2,\\
\nabla_{E_2}E_1=ab E_3,&\nabla_{E_2}E_2=(1-a)E_3,&\nabla_{E_2}E_3=-ab E_1-(1-a)E_2,\\
\nabla_{E_3}E_1=b E_2,&\nabla_{E_3}E_2=-b E_1,&\nabla_{E_3}E_3=0.
\end{array}
\end{equation}
Following a similar reasoning as in the unimodular case, we can work out the Riemannian curvature tensor $R$ of the metric Lie group.

\begin{lemma}\label{lemma:R-nouni}
In the previous situation,
\begin{align*}
R=[(1-a)^2(1+b^2)-b^2]R_1&+[(1+a)^2(1+b^2)-b^2]R_2\\
&+[(1-a^2)(1+b^2)-b^2]R_3,
\end{align*}
where, for $i\in\{1,2,3\}$ and $X,Y,Z\in\X(G)$, the tensor $R_i$ is given by
\begin{align*}
R_i(X,Y)Z&=\langle X,Z\rangle Y-\langle Y,Z\rangle X-\langle Z,E_i\rangle\langle X,E_i\rangle Y\\&\quad+\langle Z,E_i\rangle\langle Y,E_i\rangle X-\langle Y,E_i\rangle\langle X,Z\rangle E_i+\langle X,E_i\rangle\langle Y,Z\rangle E_i.
\end{align*}
\end{lemma}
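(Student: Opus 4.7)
The strategy mirrors the proof of Lemma~\ref{lemma:R-unimodular}: both the left-hand side and the proposed right-hand side are $(1,3)$-tensors on $G$, so it suffices to check that they agree when evaluated on triples drawn from the left-invariant orthonormal frame $\{E_1,E_2,E_3\}$. First I would inspect the algebraic tensors $R_i$ on this basis. A direct expansion of the defining formula shows that $R_i(E_j,E_k)E_l = 0$ whenever any of the vectors $E_j,E_k,E_l$ coincides with $E_i$, and that when $\{i,j,k\}=\{1,2,3\}$ one has $R_i(E_j,E_k)E_j = E_k$ and $R_i(E_j,E_k)E_k=-E_j$. Consequently, the right-hand side is completely diagonal with respect to the basis of $\binom{3}{2}$ coordinate planes, and the coefficient $\lambda_i$ in front of $R_i$ must equal the sectional curvature $K(E_j,E_k)$ of the plane orthogonal to $E_i$.

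Next I would compute the nine quantities $R(E_i,E_j)E_k$ (with $i<j$) using $R(X,Y)Z=\nabla_X\nabla_Y Z-\nabla_Y\nabla_X Z-\nabla_{[X,Y]}Z$, the bracket relations listed just before~\eqref{eqn:nabla-nounim}, and the connection formulas~\eqref{eqn:nabla-nounim}. The calculation splits naturally into two parts. The first is the extraction of the three sectional curvatures: for example, a straightforward computation gives
\[
R(E_1,E_2)E_1 = \bigl[(1-a^2)(1+b^2)-b^2\bigr]\,E_2,
\]
and analogously one obtains the expected factors $(1+a)^2(1+b^2)-b^2$ from $R(E_1,E_3)E_1$ and $(1-a)^2(1+b^2)-b^2$ from $R(E_2,E_3)E_2$, matching $\lambda_3$, $\lambda_2$, $\lambda_1$ respectively. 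The second part consists of the three ``mixed'' terms $R(E_1,E_2)E_3$, $R(E_1,E_3)E_2$ and $R(E_2,E_3)E_1$, which must vanish in order to be consistent with the diagonal structure of $\lambda_1 R_1+\lambda_2 R_2+\lambda_3 R_3$.

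The main obstacle is really just bookkeeping: because~\eqref{eqn:nabla-nounim} contains the symmetric off-diagonal entries $\nabla_{E_1}E_2=\nabla_{E_2}E_1=ab\,E_3$, each of the three mixed curvatures is the sum of several nonzero terms, and their vanishing is not visible term by term but only after cancellation. I would verify, for instance, that in $R(E_1,E_2)E_3$ the contribution $\nabla_{E_1}\nabla_{E_2}E_3-\nabla_{E_2}\nabla_{E_1}E_3$ equals $-2ab\,E_3-(-2ab\,E_3)=0$ while $\nabla_{[E_1,E_2]}E_3=0$ since $[E_1,E_2]=0$; and similarly for the other two mixed terms, where the cancellation relies on the precise values of the bracket $[E_2,E_3]$ and $[E_3,E_1]$. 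Once these six matchings are in place, linearity of both tensors in each argument finishes the proof.
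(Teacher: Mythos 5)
Your proposal is correct and follows essentially the same route as the paper, which simply checks (as in Lemma~\ref{lemma:R-unimodular}) that both sides agree on the left-invariant frame $\{E_1,E_2,E_3\}$ via the connection formulas~\eqref{eqn:nabla-nounim} and the bracket relations; your explicit frame computations (the three diagonal coefficients and the vanishing of the mixed terms) are accurate. The only cosmetic caveat is the identification of the coefficients with sectional curvatures, which holds only up to the sign fixed by the curvature convention, but this does not affect the verification itself.
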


\begin{remark}\label{rmk:nounim-examples}
We will briefly explain some particular cases which will appear later. If $a=0$, then $G$ has constant sectional curvature $-1$, so it is isometric to $\h^3$. If $a=1$, then $G$ is isometric to the space $\E(-4,b)$ whose isometry group has dimension $4$, and $E_2$ is a unit Killing vector field. It is interesting to observe that $\E(\kappa,\tau)$ for negative $\kappa$ admits unimodular and non-unimodular Lie group structures.
\end{remark}

\section{Totally umbilical surfaces in the unimodular case}\label{sec:unimodular}

Let us suppose that $\Sigma$ is a smooth surface isometrically immersed in a $3$-dimensional unimodular metric Lie group $G$. If $\Sigma$ is totally umbilical, then there exists a function $\lambda\in C^\infty(\Sigma)$ such that $AX=-\nabla_XN=\lambda X$ for any $X\in\X(\Sigma)$, where $N$ is a smooth unit normal vector field to the immersion and $A$ its associated Weingarten operator. We will also denote by $\{E_1,E_2,E_3\}\subset\X(G)$ a left-invariant orthonormal frame and $c_1,c_2,c_3\in\R$ satisfying~\eqref{eqn:lie-bracket-unimodular-frame}.

Let us consider a smooth parametrization $\phi:\Omega\to\Sigma$, where $\Omega\subset\R^2$ is an open domain and $\phi_u=\phi_*(\partial_u)$, $\phi_v=\phi_*(\partial_v)$ are the basic vector fields. On the one hand, by using the umbilicity condition, we can compute
\begin{equation}\label{eqn:tensor1}
R(\phi_u,\phi_v)N=\nabla_{\phi_u}\nabla_{\phi_v}N-\nabla_{\phi_v}\nabla_{\phi_u}N=\lambda_v \phi_u-\lambda_u\phi_v.
\end{equation}
On the other hand, let us write $N=\sum_{i=1}^3\nu_iE_i$, where the functions $\nu_i\in C^\infty(\Sigma)$ are given by $\nu_i=\langle N,E_i\rangle$ and satisfy $\nu_1^2+\nu_2^2+\nu_3^2=1$. They will be called \emph{angle functions} of the immersion. The unit normal field $N$ can be identified with the so-called left-invariant Gauss map 
\[N\equiv(\nu_1,\nu_2,\nu_3):\Sigma\to\s^2\subset\R^3.\] 

Now, expressing $\phi_u=\sum_{k=1}^3x_kE_k$, $\phi_v=\sum_{k=1}^3y_kE_k$, and using the tensors $R_i$ defined by Lemma~\ref{lemma:R-unimodular}, we obtain
\begin{equation}\label{eqn:tensor2}
R_i(\phi_u,\phi_v)N=\nu_i(y_i\phi_u-x_i\phi_v),\quad i\in\{1,2,3\},
\end{equation}
where the fact that $\phi_u$ and $\phi_v$ are orthogonal to $N$ is a handy condition. Thus, by comparing the coefficients in $\phi_u$ and $\phi_v$ in equations~\eqref{eqn:tensor1} and~\eqref{eqn:tensor2}, we get
\begin{align*}
\lambda_u&=(\mu_2\mu_3-c_1\mu_1)\nu_1x_1+(\mu_1\mu_3-c_2\mu_2)\nu_2x_2+(\mu_1\mu_2-c_3\mu_3)\nu_3x_3,\\
\lambda_v&=(\mu_2\mu_3-c_1\mu_1)\nu_1y_1+(\mu_1\mu_3-c_2\mu_2)\nu_2y_2+(\mu_1\mu_2-c_3\mu_3)\nu_3y_3.
\end{align*}

Finally, since $\lambda_u=\langle\phi_u,\nabla\lambda\rangle$ and $\lambda_v=\langle \phi_v,\nabla\lambda\rangle$, we reach the following expression for the gradient of $\lambda$.
\[\nabla\lambda=(\mu_2\mu_3-c_1\mu_1)\nu_1E^\top_1+(\mu_1\mu_3-c_2\mu_2)\nu_2E^\top_2+(\mu_1\mu_2-c_3\mu_3)\nu_3E^\top_3,\]
where $X^\top=X-\langle X,N\rangle N$ denotes the tangent component to $\Sigma$ of a vector field $X$. As $0=N^\top=\sum_{i=1}^3\nu_iE_i^\top$, we can simplify the last identity as
\begin{equation}\label{eqn:gradiente-unimodular}
\begin{aligned}
\nabla\lambda&=2\mu_2\mu_3\nu_1E^\top_1+2\mu_1\mu_3\nu_2E^\top_2+2\mu_1\mu_2\nu_3E_3^\top\\
&=2\mu_2(\mu_3-\mu_1)\nu_1E_1^\top+2\mu_1(\mu_3-\mu_2)\nu_2E_2^\top.
\end{aligned}
\end{equation}

Inspired by the ideas  in~\cite{ST09}, we will compute $[E_1^\top,E_2^\top](\lambda)$ in two different ways, which will give us an extra equation for the angle functions $\nu_i=\langle N,E_i\rangle$. First of all, we work out the gradients of the angle functions, which will be useful in the next computations.

\begin{lemma}\label{lemma:gradiente-angulo-unimodular}
The following equations hold:
\begin{align*}
\nabla\nu_1&=-\lambda E_1^\top-\mu_2\nu_3E_2^\top+\mu_3\nu_2E_3^\top,\\
\nabla\nu_2&=-\lambda E_2^\top+\mu_1\nu_3E_1^\top-\mu_3\nu_1E_3^\top,\\
\nabla\nu_3&=-\lambda E_3^\top+\mu_2\nu_1E_2^\top-\mu_1\nu_2E_1^\top.
\end{align*}
\end{lemma}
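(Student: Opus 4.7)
The plan is to derive each formula directly from the defining identity $\nu_i=\langle N,E_i\rangle$ by differentiating along an arbitrary tangent vector $X\in\mathfrak X(\Sigma)$ and then reading off the gradient. Writing $X=\sum_k x_kE_k^\top$ with $x_k=\langle X,E_k\rangle$, metric compatibility of $\overline\nabla$ gives
\[X(\nu_i)=\langle\overline\nabla_XN,E_i\rangle+\langle N,\overline\nabla_XE_i\rangle.\]
For the first summand I would invoke the umbilicity hypothesis $\overline\nabla_XN=-AX=-\lambda X$, which yields $\langle\overline\nabla_XN,E_i\rangle=-\lambda\langle X,E_i\rangle=-\lambda\langle X,E_i^\top\rangle$.

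For the second summand I would plug in the connection table~\eqref{eqn:nabla-unimodular}. Expanding $\overline\nabla_XE_i=\sum_k x_k\overline\nabla_{E_k}E_i$, the column of~\eqref{eqn:nabla-unimodular} corresponding to $E_1$ gives $\overline\nabla_XE_1=-\mu_2x_2E_3+\mu_3x_3E_2$, whence $\langle N,\overline\nabla_XE_1\rangle=-\mu_2\nu_3x_2+\mu_3\nu_2x_3$. Combining with the first summand yields
\[X(\nu_1)=\langle X,-\lambda E_1^\top-\mu_2\nu_3E_2^\top+\mu_3\nu_2E_3^\top\rangle,\]
where I have replaced each $E_k$ by $E_k^\top$ in the inner product because $X$ is tangent. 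Since this identity holds for every $X\in\mathfrak X(\Sigma)$, the gradient is exactly the vector field appearing on the right. The formulas for $\nabla\nu_2$ and $\nabla\nu_3$ are obtained verbatim by reading off the columns of~\eqref{eqn:nabla-unimodular} corresponding to $E_2$ and $E_3$, respectively.

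There is no genuine obstacle here: once umbilicity is inserted, the computation is linear in $X$ and the only care required is the bookkeeping of signs in~\eqref{eqn:nabla-unimodular}. The step I would double-check is precisely this sign bookkeeping, since an error there would propagate to the subsequent computation of $[E_1^\top,E_2^\top](\lambda)$ announced just before the statement.
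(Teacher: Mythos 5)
Your proposal is correct and is essentially the paper's own argument: both differentiate $\nu_i=\langle N,E_i\rangle$ using metric compatibility, insert umbilicity $\overline\nabla_{X}N=-\lambda X$ for the first term and the connection table~\eqref{eqn:nabla-unimodular} for the second, the only cosmetic difference being that the paper specializes to $X=E_k$ and uses $\langle E_k,E_j^\top\rangle=\delta_{jk}-\nu_j\nu_k$, whereas you keep $X$ arbitrary tangent and read off the gradient directly. Your sign bookkeeping in all three formulas checks out against~\eqref{eqn:nabla-unimodular}.
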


\begin{proof}
Observe that, for any $X\in\X(G)$ and $i\in\{1,2,3\}$, we can write using the umbilicity
\begin{align*}
\langle\nabla\nu_i,X\rangle=\langle\nabla\nu_i,X^\top\rangle&=X^\top(\langle E_i,N\rangle)=\langle\overline\nabla_{X^\top}E_i,N\rangle+\langle E_i,\nabla_{X^\top}N\rangle
\\&=\sum_{j=1}^3\langle X^\top,E_j^\top\rangle\langle\overline\nabla_{E_j}E_i,N\rangle-\lambda\langle E_i,X^\top\rangle.
\end{align*}
If we choose $X=E_k$ for $k\in\{1,2,3\}$ and take into account that 
\begin{equation}\label{eqn:scalarproduct}
\langle E_k,E_j^\top\rangle=\langle E_k^\top,E_j^\top\rangle=\delta_{jk}-\nu_j\nu_k,
\end{equation}
where $\delta_{jk}$ is the Kronecker delta, the statement follows.
\end{proof}

To compute $[E_1^\top,E_2^\top](\lambda)$, we will write it as $\langle[E_1^\top,E_2^\top],\nabla\lambda\rangle$ and calculate $[E_1^\top,E_2^\top]=\nabla_{E_1^\top}E_2^\top-\nabla_{E_2^\top}E_1^\top$. Now observe that, given $i,j\in\{1,2,3\}$,
\begin{align*}
\nabla_{E_i^\top}E_j^\top=\nabla_{E_i^\top}(E_j-\nu_jN)&=\left(\overline\nabla_{E_i^\top}E_j-E_i^\top(\nu_j)N-\nu_j\nabla_{E_i^\top}N\right)^\top\\
&=\lambda\nu_jE_i^\top+\sum_{k=1}^3\langle E_i^\top,E_k^\top\rangle\left(\overline\nabla_{E_k}{E_j}\right)^\top.
\end{align*}
Making $(i,j)=(1,2)$ or $(i,j)=(2,1)$ and use \eqref{eqn:nabla-unimodular} and \eqref{eqn:scalarproduct}, we get
\begin{align*}
\nabla_{E_2^\top}E_1^\top&=(\lambda\nu_1-\mu_3\nu_2\nu_3)E_2^\top-\mu_2(1-\nu_2^2)E_3^\top,
\\
\nabla_{E_1^\top}E_2^\top&=(\lambda\nu_2+\mu_3\nu_1\nu_3)E_1^\top+\mu_1(1-\nu_1^2)E_3^\top.
\end{align*}
By taking into account that $\nu_1E_1^\top+\nu_2E_2^\top+\nu_3E_3^\top=0$, we can simplify
\begin{equation}\label{eqn:corchete1-unimodular}
\begin{aligned}[]
[E_1^\top,E_2^\top]&=\nabla_{E_1^\top}E_2^\top-\nabla_{E_2^\top}E_1^\top\\
&=\lambda(\nu_2E_1^\top-\nu_1E_2^\top)+(\mu_1(1-\nu_1^2)+\mu_2(1-\nu_2^2)-\mu_3\nu_3^2)E_3^\top,
\end{aligned}
\end{equation}
so we finally obtain the desired expression from \eqref{eqn:gradiente-unimodular} and \eqref{eqn:corchete1-unimodular}:
\begin{align}
[E_1^\top,E_2^\top](\lambda)&=\langle[E_1^\top,E_2^\top],\nabla\lambda\rangle\notag\\
&=2\lambda\mu_3(\mu_2-\mu_1)\nu_1\nu_2-2\left(\mu_1(1-\nu_1^2)+\mu_2(1-\nu_2^2)-\mu_3\nu_3^2\right)\cdot\notag\\
&\quad\quad\quad\quad\quad\quad\quad\quad\quad\cdot\left(\mu_2(\mu_3-\mu_1)\nu_1^2\nu_3+\mu_1(\mu_3-\mu_2)\nu_2^2\nu_3\right).\label{eqn:corchete-lambda-unimodular}
\end{align}

As mentioned before, we will compute the bracket in another way; namely, we will compute $[E_1^\top,E_2^\top](\lambda)=E_1^\top(E_2^\top(\lambda))-E_2^\top(E_1^\top(\lambda))$. Using \eqref{eqn:gradiente-unimodular}, it is easy to check that
\begin{align*}
E_1^\top(\lambda)&=\langle E_1^\top,\nabla\lambda\rangle=2\mu_2(\mu_3-\mu_1)\nu_1(1-\nu_1^2)-2\mu_1(\mu_3-\mu_2)\nu_1\nu_2^2,\\
E_2^\top(\lambda)&=\langle E_2^\top,\nabla\lambda\rangle=-2\mu_2(\mu_3-\mu_1)\nu_1^2\nu_2+2\mu_1(\mu_3-\mu_2)\nu_2(1-\nu_2^2).
\end{align*}
Hence, we can apply Lemma~\ref{lemma:gradiente-angulo-unimodular} to take derivatives in these two last expressions and obtain
\begin{align}
 E_2^\top(E_1^\top(\lambda))&=2\mu_2(\mu_3-\mu_1)(1-3\nu_1^2)\langle E_2^\top,\nabla\nu_1\rangle\notag\\
&\quad\quad-2\mu_1(\mu_3-\mu_2)(\nu_2^2\langle E_2^\top,\nabla\nu_1\rangle+2\nu_1\nu_2\langle E_2^\top,\nabla\nu_2\rangle)\notag\\
&=2\mu_2(\mu_3-\mu_1)(1-3\nu_1^2)(\lambda\nu_1\nu_2-\mu_2(1-\nu_2^2)\nu_3-\mu_3\nu_2^2\nu_3)\notag\\
&\quad\quad-2\mu_1(\mu_3-\mu_2)\nu_2^2(\lambda\nu_1\nu_2-\mu_2(1-\nu_2^2)\nu_3-\mu_3\nu_2^2\nu_3)\notag\\
&\quad\quad+4\mu_1(\mu_3-\mu_2)\nu_1\nu_2(\lambda(1-\nu_2^2)-(\mu_3-\mu_1)\nu_1\nu_2\nu_3),\label{eqn-corchete21-unimodular}
\end{align}
\begin{align}
 E_1^\top(E_2^\top(\lambda))&=-2\mu_2(\mu_3-\mu_1)(2\nu_1\nu_2\langle E_1^\top,\nabla\nu_1\rangle+\nu_1^2\langle E_1^\top,\nabla\nu_2\rangle)\notag\\
&\quad\quad+2\mu_1(\mu_3-\mu_2)(1-3\nu_2^2)\langle E_1^\top,\nabla\nu_2\rangle\notag\\
&=4\mu_2(\mu_3-\mu_1)\nu_1\nu_2(\lambda(1-\nu_1^2)-(\mu_2-\mu_3)\nu_1\nu_2\nu_3)\notag\\
&\quad\quad-2\mu_2(\mu_3-\mu_1)\nu_1^2(\lambda\nu_1\nu_2+\mu_1(1-\nu_1^2)\nu_3+\mu_3\nu_1^2\nu_3)\notag\\
&\quad\quad+2\mu_1(\mu_3-\mu_2)(1-3\nu_2^2)(\lambda\nu_1\nu_2+\mu_1(1-\nu_1^2)\nu_3+\mu_3\nu_1^2\nu_3).\label{eqn-corchete22-unimodular}
\end{align}

\begin{lemma}\label{lemma:ecuacion2-unimodular}
The angle functions of a totally umbilical surface in $G$ satisfy
\begin{equation}\label{eqn:sistema2-unimodular}
\left\{\begin{array}{r}
\nu_1^2+\nu_2^2+\nu_3^2=1,\\
\beta_1\nu_1^2+\beta_2\nu_2^2+\beta_3\nu_3^2=0,
\end{array}\right.
\end{equation}
where $\beta_1,\beta_2,\beta_3$ are the real numbers defined by
\begin{align}
\beta_1&=\mu_2^2(\mu_1-\mu_3)+\mu_3^2(\mu_1-\mu_2),\notag\\
\beta_2&=\mu_3^2(\mu_2-\mu_1)+\mu_1^2(\mu_2-\mu_3),\label{eqn:beta-unimodular}\\
\beta_3&=\mu_1^2(\mu_3-\mu_2)+\mu_2^2(\mu_3-\mu_1),\notag
\end{align}
which depend only on the structure of $G$ and satisfy $\beta_1+\beta_2+\beta_3=0$. 
\end{lemma}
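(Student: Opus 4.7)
The first identity $\nu_1^2+\nu_2^2+\nu_3^2=1$ is immediate from $|N|=1$, so only the second equation requires work. The plan is to equate the two expressions for $[E_1^\top,E_2^\top](\lambda)$ already prepared in the excerpt: on one hand the formula $\langle[E_1^\top,E_2^\top],\nabla\lambda\rangle$ obtained from \eqref{eqn:corchete-lambda-unimodular}, and on the other hand the iterated-derivative form $E_1^\top(E_2^\top(\lambda))-E_2^\top(E_1^\top(\lambda))$ obtained by subtracting \eqref{eqn-corchete21-unimodular} from \eqref{eqn-corchete22-unimodular}.

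Before reading off the desired constraint, a consistency check is in order: the final identity contains no $\lambda$, so the $\lambda$-linear contributions on both sides must match automatically. One checks that the coefficient of $\lambda$ on the left equals $2\mu_3(\mu_2-\mu_1)\nu_1\nu_2$, while a short expansion of the $\lambda$-terms on the right, after using $\nu_1^2+\nu_2^2+\nu_3^2=1$, collapses to the very same quantity. This cancellation confirms the computation and lets one extract a purely algebraic relation among the angle functions.

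Once the $\lambda$-parts are discarded, the residual identity is a homogeneous polynomial relation in $\nu_1,\nu_2,\nu_3$ with coefficients in the $\mu_i$. Both sides carry a common factor of $\nu_3$, and after dividing it out on the open set where $\nu_3\neq 0$ and grouping terms by the monomial patterns $\nu_i^2$, $\nu_i^4$, and $\nu_1^2\nu_2^2$, one uses $\nu_1^2+\nu_2^2+\nu_3^2=1$ to simplify and obtain precisely $\beta_1\nu_1^2+\beta_2\nu_2^2+\beta_3\nu_3^2=0$, with $\beta_i$ as in \eqref{eqn:beta-unimodular}. The case $\nu_3\equiv 0$ on an open subset of $\Sigma$ is dealt with by rerunning the entire argument with $[E_2^\top,E_3^\top]$ or $[E_1^\top,E_3^\top]$, which yields the analogous identity by symmetry. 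Finally, the cancellation $\beta_1+\beta_2+\beta_3=0$ is instantaneous, since each $\mu_j^2$ appears in the sum with total coefficient $(\mu_i-\mu_k)+(\mu_k-\mu_i)=0$.

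The main obstacle is therefore not conceptual but purely algebraic: one must organize the expansion so that the $\lambda$-dependent contributions cancel cleanly and the residual polynomial, after the common factor $\nu_3$ is removed, collapses into the symmetric quadratic form $\beta_1\nu_1^2+\beta_2\nu_2^2+\beta_3\nu_3^2$. Conceptually, the whole argument is a Codazzi-type compatibility: the bracket $[E_1^\top,E_2^\top](\lambda)$ admits two computations that must agree, and this agreement is automatic in $\lambda$ precisely because it forces a nontrivial quadratic relation on the left-invariant Gauss map.
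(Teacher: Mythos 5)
Your proposal is correct and follows essentially the same route as the paper: equate the expression \eqref{eqn:corchete-lambda-unimodular} for $[E_1^\top,E_2^\top](\lambda)$ with the difference of \eqref{eqn-corchete22-unimodular} and \eqref{eqn-corchete21-unimodular}, observe that the $\lambda$-terms cancel, and simplify the remaining algebraic identity (which indeed reduces, modulo $\nu_1^2+\nu_2^2+\nu_3^2=1$, to $\nu_3\bigl(\beta_1\nu_1^2+\beta_2\nu_2^2+\beta_3\nu_3^2\bigr)=0$). Your treatment of the overall factor $\nu_3$ --- disposing of the locus $\nu_3=0$ by running the same computation with $[E_1^\top,E_3^\top]$ and $[E_2^\top,E_3^\top]$, which by the cyclic symmetry of \eqref{eqn:nabla-unimodular} yields $\nu_2$ and $\nu_1$ times the same quadratic form --- is a detail the paper's one-line proof leaves implicit, and the only slip is cosmetic: the $\lambda$-linear contributions cancel identically, without needing the unit-normal relation.
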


\begin{proof}
It suffices to substract~\eqref{eqn-corchete21-unimodular} from \eqref{eqn-corchete22-unimodular} and impose that the result is equal to \eqref{eqn:corchete-lambda-unimodular}.  The second equation in \eqref{eqn:sistema2-unimodular} follows from simplifying the resulting equality, and the first one from the fact that $N$ is unitary.
\end{proof}

As $\beta_1+\beta_2+\beta_3=0$, it is obvious that any $\nu_1,\nu_2,\nu_3\in\R$ such that $\nu_1^2=\nu_2^2=\nu_3^2=\frac{1}{3}$  satisfy the two equations in~\eqref{eqn:sistema2-unimodular}, so its set of solutions is non-empty. On the other hand, the conditions in~\eqref{eqn:sistema2-unimodular} say that the image of the left-invariant Gauss map $(\nu_1,\nu_2,\nu_3):\Sigma\to\s^2$ has dimension at most one, unless the system has rank $1$ as a linear system in the unknowns $\{\nu_1^2,\nu_2^2,\nu_3^2\}$ or, equivalently, when $\beta_1^2+\beta_2^2+\beta_3^2=0$.

It is easy to check from the expressions of the $\beta_i$ that they all vanish if and only if either $\mu_1=\mu_2=\mu_3$ (which means $c_1=c_2=c_3$) or two of the $\mu_i$ vanish (which means $c_1=c_2$, $c_3=0$ up to a permutation of indexes). We conclude that the system has rank $2$ except for $\R^3$, $\s^3$ or $\widetilde E(2)$ with the standard metric (i.e., except for those spaces with constant sectional curvature). As totally umbilical surfaces in these ambient manifolds are well-known, we will assume that $\beta_1^2+\beta_2^2+\beta_3^2\neq 0$ from now on, so the system~\eqref{eqn:sistema2-unimodular} can solved parametrically as
\begin{equation}\label{eqn:parametric-umbilical}
\begin{cases}\nu_1^2=\tfrac{1}{3}+(\beta_3-\beta_2)\eta,&\\
 \nu_2^2=\tfrac{1}{3}+(\beta_1-\beta_3)\eta,&\\
 \nu_3^2=\tfrac{1}{3}+(\beta_2-\beta_1)\eta,&
\end{cases}
\end{equation}
for a certain function $\eta\in C^\infty(\Sigma)$.

\begin{lemma}\label{lemma:ecuacion3-unimodular}
Under the hypothesis $\beta_1^2+\beta_2^2+\beta_3^2\neq 0$, the angle functions of a totally umbilical surface in $G$, with umbilicity function $\lambda$, satisfy
\begin{equation}\label{eqn:equation3-unimodular}
\lambda^2+\mu_2\mu_3\nu_1^2+\mu_1\mu_3\nu_2^2+\mu_1\mu_2\nu_3^2=0.
\end{equation}
\end{lemma}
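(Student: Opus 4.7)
My plan is to derive \eqref{eqn:equation3-unimodular} as an additional independent algebraic constraint, obtained by the same commutator-symmetry argument that produced Lemma~\ref{lemma:ecuacion2-unimodular} (namely, equating two expressions for a bracket $[X,Y]f$) but now applied to a scalar function other than $\lambda$, whose iterated tangential derivatives will bring in the quadratic term $\lambda^2$ that appears in~\eqref{eqn:equation3-unimodular}. A natural candidate is one of the angle functions, say $f = \nu_3$, since both $\nabla\nu_3$ (from Lemma~\ref{lemma:gradiente-angulo-unimodular}) and the iterated derivatives $E_i^\top(E_j^\top(\nu_3))$ explicitly involve $\lambda$ and the remaining $\nu_i$.

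Concretely, the first step is to compute $[E_1^\top,E_2^\top](\nu_3)=\langle[E_1^\top,E_2^\top],\nabla\nu_3\rangle$ by substituting the bracket expression \eqref{eqn:corchete1-unimodular} and the formula for $\nabla\nu_3$ from Lemma~\ref{lemma:gradiente-angulo-unimodular}, expanding the inner products via $\langle E_i^\top, E_j^\top\rangle = \delta_{ij} - \nu_i\nu_j$. The second step is to compute the same bracket as $E_1^\top(E_2^\top(\nu_3)) - E_2^\top(E_1^\top(\nu_3))$: this requires writing out the scalar functions $E_i^\top(\nu_j) = \langle E_i^\top,\nabla\nu_j\rangle$ from Lemma~\ref{lemma:gradiente-angulo-unimodular} and differentiating them once more, using the identities $E_i^\top(\lambda) = 2\nu_i(\mu_j\mu_k - \mu_2\mu_3\nu_1^2 - \mu_1\mu_3\nu_2^2 - \mu_1\mu_2\nu_3^2)$ (with $(i,j,k)$ a cyclic permutation of $(1,2,3)$) derived from \eqref{eqn:gradiente-unimodular}. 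Equating the two expressions and simplifying with the unit-norm condition $\nu_1^2+\nu_2^2+\nu_3^2=1$ together with the constraint $\beta_1\nu_1^2+\beta_2\nu_2^2+\beta_3\nu_3^2=0$ of Lemma~\ref{lemma:ecuacion2-unimodular}, the identity should collapse to~\eqref{eqn:equation3-unimodular}, possibly after dividing out a common non-vanishing factor.

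The principal obstacle is the sheer algebraic bulk of the second step, where iterated derivatives of the $\nu_j$ produce many monomials of the shape $\mu_i^a\mu_j^b\mu_k^c\,\nu_1^{p_1}\nu_2^{p_2}\nu_3^{p_3}$, and the relevant cancellations hinge on systematic use of the linear dependence $\nu_1 E_1^\top + \nu_2 E_2^\top + \nu_3 E_3^\top = 0$ and of Lemma~\ref{lemma:ecuacion2-unimodular}. Should the bracket on $\nu_3$ fail to expose the $\lambda^2$-term directly (for instance if the $\lambda^2$ contributions from the two sides match identically), an ancillary route is to run the same argument on $\nu_1$ and $\nu_2$ and combine the resulting linear-in-$\lambda$ relations, or equivalently to couple the bracket analysis with an independent computation of $K_\Sigma$ via the Gauss equation $K_\Sigma = \lambda^2 - \sum_i(\mu_j\mu_k - c_i\mu_i)\nu_i^2$, which places \eqref{eqn:equation3-unimodular} on equal footing with an intrinsic-curvature identity.
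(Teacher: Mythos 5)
There is a genuine gap: the mechanism you rely on --- that iterated tangential derivatives will ``bring in'' the quadratic term $\lambda^2$ --- does not materialize. If you carry out your two computations for $f=\nu_3$, every $\lambda^2$-contribution cancels identically. In $E_1^\top(E_2^\top(\nu_3))$ the only $\lambda^2$-terms come from $\lambda\nu_3\,E_1^\top(\nu_2)$ and $\lambda\nu_2\,E_1^\top(\nu_3)$ and add up to $2\lambda^2\nu_1\nu_2\nu_3$; exactly the same amount appears in $E_2^\top(E_1^\top(\nu_3))$, so it disappears from the difference. Likewise, in $\langle[E_1^\top,E_2^\top],\nabla\nu_3\rangle$ the pairing of $\lambda(\nu_2E_1^\top-\nu_1E_2^\top)$ from \eqref{eqn:corchete1-unimodular} with $-\lambda E_3^\top$ from Lemma~\ref{lemma:gradiente-angulo-unimodular} gives $\lambda^2\nu_1\nu_2\nu_3-\lambda^2\nu_1\nu_2\nu_3=0$, and your formula for $E_i^\top(\lambda)$ (which is correct) is $\lambda$-free. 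Hence the identity produced by the commutator on $\nu_3$ (and, by the same degree count, on $\nu_1$ or $\nu_2$) is at most linear in $\lambda$ and cannot ``collapse'' to \eqref{eqn:equation3-unimodular}, whose $\lambda^2$-coefficient is $1$; it is not even clear that these relations are nontrivial modulo $\sum_i\nu_i^2=1$ and Lemma~\ref{lemma:ecuacion2-unimodular}, and your fallbacks (eliminating $\lambda$ between linear relations, or invoking the Gauss equation, which introduces the extra unknown $K_\Sigma$ rather than removing one) are not shown to recover a relation that is genuinely quadratic in $\lambda$. Note also that you never use the hypothesis $\beta_1^2+\beta_2^2+\beta_3^2\neq0$ except as a passive simplification, whereas it is the decisive geometric input.

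The paper's argument is of a different nature and you would need its key idea to close the gap: since $\beta_1^2+\beta_2^2+\beta_3^2\neq0$, the system \eqref{eqn:sistema2-unimodular} forces the image of the left-invariant Gauss map to have dimension at most one, so at each $p\in\Sigma$ there exists $u=\sum_ib_iE_i\in T_p\Sigma$ with $dN_p(u)=0$. For such $u$ one has $\nabla_uN=\sum_{i,j}\nu_ib_j\overline\nabla_{E_j}E_i$, and umbilicity $\nabla_uN=-\lambda u$ together with $\langle u,N\rangle=0$ gives the homogeneous $4\times3$ linear system \eqref{eqn:sistema-nivel-unimodular} admitting the nontrivial solution $(b_1,b_2,b_3)$. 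The vanishing of its four $3$-minors yields $\nu_1,\nu_2,\nu_3,\lambda$ times $\lambda^2+\mu_2\mu_3\nu_1^2+\mu_1\mu_3\nu_2^2+\mu_1\mu_2\nu_3^2$, and since the $\nu_i$ cannot all vanish, \eqref{eqn:equation3-unimodular} follows pointwise. In short, the $\lambda^2$-term enters through this pointwise linear-algebra argument exploiting the rank-deficiency of $dN$, not through further commutator identities; without that ingredient your plan does not reach the statement.
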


\begin{proof}
As the image of the left-invariant Gauss map has dimension at most one, given any point $p\in\Sigma$, there exists a tangent direction $u\in T_p\Sigma$ such that $dN_p(u)=0$ (i.e., $\langle u,\nabla\nu_i\rangle=0$ for $i\in\{1,2,3\}$). Writing $u=\sum_{i=1}^3b_iE_i$, we deduce that
\begin{align*}
 \nabla_uN&=\sum_{i=1}^3\langle u,\nabla\nu_i\rangle E_i+\sum_{i=1}^3\nabla_{\gamma'}E_i=\sum_{i,j=1}^3\nu_ib_j\overline\nabla_{E_j}E_i\\
 &=(\mu_2\nu_3b_2-\mu_3\nu_2b_3)E_1+(\mu_3\nu_1b_3-\mu_1\nu_3b_1)E_2+(\mu_1\nu_2b_1-\mu_2\nu_1b_2)E_3,
\end{align*}
where everything is computed at $p$. Identifying the corresponding coefficients in the umbilicity condition $\nabla_uN=-\lambda u=-\sum_{i=1}^3\lambda b_iE_i$, we obtain three equations which, together with the fact that $0=\langle u,N\rangle=\sum_{i=1}^3b_i\nu_i$, can be written in matrix-form as
\begin{equation}\label{eqn:sistema-nivel-unimodular}
\left(\begin{matrix}\lambda&\mu_2\nu_3&-\mu_3\nu_2\\-\mu_1\nu_3&\lambda&\mu_3\nu_1\\\mu_1\nu_2&-\mu_2\nu_1&\lambda\\\nu_1&\nu_2&\nu_3\end{matrix}\right)
\left(\begin{matrix}b_1\\b_2\\b_3\end{matrix}\right)=\left(\begin{matrix}0\\0\\0\\0\end{matrix}\right).\end{equation}
As this linear system has a non-trivial solution, the four $3$-minors of the coefficient matrix must vanish, giving rise to the following four equations:
\begin{align*}
\nu_1(\lambda^2+\mu_2\mu_3\nu_1^2+\mu_1\mu_3\nu_2^2+\mu_1\mu_2\nu_3^2)=0,\\
\nu_2(\lambda^2+\mu_2\mu_3\nu_1^2+\mu_1\mu_3\nu_2^2+\mu_1\mu_2\nu_3^2)=0,\\
\nu_3(\lambda^2+\mu_2\mu_3\nu_1^2+\mu_1\mu_3\nu_2^2+\mu_1\mu_2\nu_3^2)=0,\\
\lambda(\lambda^2+\mu_2\mu_3\nu_1^2+\mu_1\mu_3\nu_2^2+\mu_1\mu_2\nu_3^2)=0.
\end{align*}
As $\nu_1^2+\nu_2^2+\nu_3^2=1$, equation \eqref{eqn:equation3-unimodular} holds at the arbitrary point $p\in\Sigma$.
\end{proof}

Let us now put together \eqref{eqn:sistema2-unimodular} and \eqref{eqn:equation3-unimodular} to get
\begin{equation}\label{eqn:sistema3-unimodular}
\left\{\begin{array}{r}
\nu_1^2+\nu_2^2+\nu_3^2=1,\\
\beta_1\nu_1^2+\beta_2\nu_2^2+\beta_3\nu_3^2=0,\\
\mu_2\mu_3\nu_1^2+\mu_1\mu_3\nu_2^2+\mu_1\mu_2\nu_3^2=-\lambda^2.
\end{array}\right.
\end{equation}
As a linear system with unknowns $\{\nu_1^2,\nu_2^2,\nu_3^2\}$, the determinant of the coefficient matrix is given by
\begin{equation}\label{eqn:delta-unimodular}
\Delta=(\mu_1-\mu_2)(\mu_2-\mu_3)(\mu_3-\mu_1)(\mu_1\mu_2+\mu_2\mu_3+\mu_1\mu_3).
\end{equation}
We will distinguish cases depending on whether or not this determinant vanishes (i.e., whether or not the system \eqref{eqn:sistema3-unimodular} is degenerate). 

It is convenient to begin by discussing totally umbilical surfaces with $\lambda$ constant or, equivalently, when $\|\nabla\lambda\|=0$. The following formula for the squared norm of the gradient of $\lambda$ can be deduced directly from~\eqref{eqn:gradiente-unimodular} and will be useful for this purpose:
\begin{equation}\label{eqn:nablalambda-unimodular}
\begin{array}{rl}\|\nabla\lambda\|^2&=4\mu_2^2(\mu_3-\mu_1)^2\nu_1^2(1-\nu_1^2)+4\mu_1^2(\mu_3-\mu_2)^2\nu_2^2(1-\nu_2^2)\\
&\quad\quad-8\mu_1\mu_2(\mu_3-\mu_1)(\mu_3-\mu_2)\nu_1^2\nu_2^2.
\end{array}
\end{equation}
Next result generalizes and gives an alternative proof of~\cite[Theorem 7.2]{Tsu96}, where totally geodesic surfaces in unimodular metric Lie groups are classified.

\begin{proposition}\label{prop:lambda-constant-unimodular}
Let us suppose that $\beta_1^2+\beta_2^2+\beta_3^2\neq 0$ and let $\Sigma$ be a totally umbilical surface in $G$ with constant umbilicity function. Then:
\begin{itemize}
 \item[a)] $\Sigma$ is totally geodesic and $\Delta\neq0$.
 \item[b)] If we suppose that $c_3\leq c_2\leq c_1$, then $c_3<0<c_1$, $c_2=c_1+c_3$, and $\Sigma$ is an integral surface of one of the distribution spanned by $\{\sqrt{c_1}E_1+\sqrt{-c_3}E_3,E_2\}$ or $\{\sqrt{c_1}E_1-\sqrt{-c_3}E_3,E_2\}$.
\end{itemize}
\end{proposition}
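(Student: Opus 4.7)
The plan is to exploit the constancy of $\lambda$ (equivalently $\nabla\lambda=0$) to extract strong algebraic constraints on the angle functions $\nu_i$ and the structure constants $\mu_i$. Starting from the unsimplified form of the gradient in~\eqref{eqn:gradiente-unimodular}, $\nabla\lambda = 2\mu_2\mu_3\nu_1 E_1^\top + 2\mu_1\mu_3\nu_2 E_2^\top + 2\mu_1\mu_2\nu_3 E_3^\top$, I would take its inner product with each $E_i$, apply $\langle E_j^\top,E_i\rangle=\delta_{ij}-\nu_i\nu_j$ from~\eqref{eqn:scalarproduct}, and use Lemma~\ref{lemma:ecuacion3-unimodular} to rewrite $\nabla\lambda=0$ as the three identities
\begin{equation*}
\nu_i\bigl(\mu_j\mu_k+\lambda^2\bigr)=0, \qquad \text{for every cyclic permutation } (i,j,k) \text{ of } (1,2,3).
\end{equation*}

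Next, I would case-split on how many angle functions vanish. If all three are nonzero, then $\mu_1\mu_2=\mu_1\mu_3=\mu_2\mu_3=-\lambda^2$, which forces either $\mu_1=\mu_2=\mu_3$ or two of the $\mu_i$ to vanish; both possibilities contradict the hypothesis $\beta_1^2+\beta_2^2+\beta_3^2\neq 0$. If exactly one $\nu_i$ is nonzero, say $\nu_1=\pm 1$, then $N=\pm E_1$ is left-invariant and the umbilicity identity $-\nabla_{E_j}N=\lambda E_j$ for $j\in\{2,3\}$ together with~\eqref{eqn:nabla-unimodular} forces $\mu_2=\mu_3=0=\lambda$, again making all $\beta_i$ vanish. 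Hence exactly two angle functions are nonzero; by symmetry suppose $\nu_3=0$ and $\nu_1\nu_2\neq 0$. The identities above give $\mu_3(\mu_1-\mu_2)=0$. Ruling out the sub-case $\mu_1=\mu_2$ (which yields $\beta_1=\beta_2=\mu_1^2(\mu_1-\mu_3)$, so~\eqref{eqn:sistema2-unimodular} forces $\beta_1=0$ and hence $\mu_1=\mu_3$ or $\mu_1=0$, both excluded), one concludes $\mu_3=0$ and $\lambda=0$; in particular $\Sigma$ is totally geodesic.

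To finish (b), I would use the ordering $c_3\leq c_2\leq c_1$, equivalent to $\mu_1\leq\mu_2\leq\mu_3$, to pin down which $\mu_i$ vanishes. A brief check eliminates $\mu_1=0$ and $\mu_3=0$: in both cases the residual relation $\mu_j\nu_k^2+\mu_k\nu_j^2=0$ coming from~\eqref{eqn:sistema2-unimodular} would require $\mu_j$ and $\mu_k$ to have opposite signs, incompatible with the ordering unless a further $\mu_i$ also vanishes, which falls back into the excluded scenarios. Thus $\mu_2=0$, i.e.\ $c_2=c_1+c_3$, with $\mu_1=c_3$ and $\mu_3=c_1$. Solving $\mu_3\nu_1^2+\mu_1\nu_3^2=0$ together with $\nu_1^2+\nu_3^2=1$ yields $\nu_1^2=-c_3/(c_1-c_3)$ and $\nu_3^2=c_1/(c_1-c_3)$, and the admissibility $\nu_i^2\in(0,1)$ forces $c_3<0<c_1$. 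The tangent plane, being orthogonal to $N=\nu_1 E_1+\nu_3 E_3$, is spanned by $E_2$ and $\nu_3 E_1-\nu_1 E_3$, which rescales to $\sqrt{c_1}\,E_1\mp\sqrt{-c_3}\,E_3$. Substituting $\mu_2=0$ into~\eqref{eqn:delta-unimodular} gives $\Delta=-(\mu_1\mu_3)^2(\mu_3-\mu_1)\neq 0$, completing~(a).

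The main obstacle will be the bookkeeping in the case analysis—especially ruling out the sub-case $\mu_1=\mu_2$ in the two-nonzero scenario and verifying that only $\mu_2=0$ is consistent with the prescribed ordering of the $c_i$—but each step reduces to a direct algebraic check using~\eqref{eqn:sistema2-unimodular} and the explicit formulas~\eqref{eqn:beta-unimodular}.
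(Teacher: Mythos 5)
Your proof is correct, but it takes a genuinely different route from the paper's. The paper plugs the parametrization \eqref{eqn:parametric-umbilical} into $\|\nabla\lambda\|^2=0$ via \eqref{eqn:nablalambda-unimodular}, obtains a quadratic $a_2\eta^2+a_1\eta+a_0=0$ with coefficients depending only on the $\mu_i$, deduces that $\eta$ and hence the left-invariant Gauss map are constant, and then analyzes the two roots, splitting into $\Delta=0$ and $\Delta\neq0$ and using sign arguments on the $\beta_i$ to discard the second root. You instead expand $\nabla\lambda=0$ in the left-invariant frame and invoke Lemma~\ref{lemma:ecuacion3-unimodular} to factor the components as $\nu_i(\lambda^2+\mu_j\mu_k)=0$ --- these are precisely the coefficients $a_i$ that the paper only writes down later, in the proof of Theorem~\ref{thm:unimodular} --- and then run a case analysis on the vanishing of the angle functions, using the second equation of \eqref{eqn:sistema2-unimodular} to exclude the sub-case $\mu_1=\mu_2$ and the ordering $\mu_1\leq\mu_2\leq\mu_3$ to force $\mu_2=0$, $\mu_1<0<\mu_3$ and $\lambda=0$. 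I checked each elimination and they are all correct (incidentally, your values $\mu_1=c_3$, $\mu_3=c_1$ are the right ones; the paper's $2c_3$, $2c_1$ is a harmless typo). Your route is more elementary, avoiding the discriminant bookkeeping and the unwieldy second root; the paper's route buys the constancy of the Gauss map up front and the $\Delta=0$ analysis that is reused in Corollary~\ref{coro:deltacero}. Two small points to tighten in a final write-up: the case ``exactly one $\nu_i$ nonzero'' must hold on an open set (equivalently, organize the cases by which of the constants $\lambda^2+\mu_j\mu_k$ vanish, since $\nu_i\equiv0$ wherever the corresponding constant is nonzero) for the computation $\nabla_{E_j}N=-\lambda E_j$ to be legitimate; and to conclude that $\Sigma$ is globally an integral surface of one of the two distributions, observe that $\lambda=0$ and $\mu_2=0$ turn \eqref{eqn:equation3-unimodular} into $\mu_1\mu_3\nu_2^2=0$, so $\nu_2\equiv0$ on all of $\Sigma$, after which \eqref{eqn:sistema2-unimodular} fixes $\nu_1^2,\nu_3^2$ at the stated constants everywhere and continuity makes $N$ constant on each component.
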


\begin{remark}
The distributions in the statement can be easily shown to be integrable (in fact, they span Lie subalgebras of the Lie algebra of $G$). It is also easy to check that any integral surface of the distribution is a totally geodesic surface.
\end{remark}

\begin{proof}
After substituting the parametric expressions given by~\eqref{eqn:parametric-umbilical} in~\eqref{eqn:nablalambda-unimodular}, the condition $\|\nabla\lambda\|^2=0$ can be written in terms of $\eta\in C^\infty(\Sigma)$ as a second-order equation $a_2\eta^2+a_1\eta+a_0=0$, where
\begin{align*}
 a_0&=\tfrac{8}{9}(\mu_1^2\mu_2^2+\mu_2^2\mu_3^2+\mu_3^2\mu_1^2-\mu_1\mu_2\mu_3(\mu_1+\mu_2+\mu_3)),\\
 a_1&=\tfrac{4}{3}(\mu_1-\mu_2)(\mu_2-\mu_3)(\mu_3-\mu_1)\cdot\\
&\quad\quad\cdot(4\mu_1\mu_2\mu_3(\mu_1+\mu_2+\mu_3)-\mu_1^2\mu_2^2-\mu_2^2\mu_3^2-\mu_3^2\mu_1^2),\\
 a_2&=-4(\mu_1-\mu_2)^2(\mu_2-\mu_3)^2(\mu_3-\mu_1)^2(\mu_1\mu_2+\mu_2\mu_3+\mu_3\mu_1)^2.
\end{align*}
As $a_0,a_1,a_2$ are constants depending on $\mu_1,\mu_2,\mu_3$, we deduce that $\eta$ is constant, so~\eqref{eqn:parametric-umbilical} implies that the left-invariant Gauss map is also constant. Since $a_2=-4\Delta^2$, we will distinguish the cases $\Delta=0$ and $\Delta\neq 0$.

In view of~\eqref{eqn:delta-unimodular}, the condition $\Delta=0$ gives rise to two subcases:
\begin{itemize}
 \item Two of the $\mu_i$ are equal. If we suppose that $\mu_1=\mu_2$ with no loss of generality, it turns out that $a_1=a_2=0$ and $a_0=\frac{8}{9}\mu_2^2(\mu_2-\mu_3)$, from where we deduce that $a_0$ must be equal to $0$. Thus, either $\mu_1=\mu_2=0$ or $\mu_1=\mu_2=\mu_3$, so $\beta_1^2+\beta_2^2+\beta_3^2=0$ in any case.
\item If $\mu_1\mu_2+\mu_2\mu_3+\mu_1\mu_3=0$ and no two of the $\mu_i$ are equal, then $a_1\neq0$ so we can solve for $\eta$ and obtain the unique solution
\begin{equation}\label{eqn:eta1-unimodular}
\eta=\frac{1}{3(\mu_1-\mu_2)(\mu_2-\mu_3)(\mu_3-\mu_1)},
\end{equation}
which allows us to substitute in \eqref{eqn:parametric-umbilical} and obtain
\begin{align}
 \nu_1^2=\frac{\mu_1^2}{(\mu_1-\mu_2)(\mu_1-\mu_3)}\notag\\
 \nu_2^2=\frac{\mu_2^2}{(\mu_2-\mu_1)(\mu_2-\mu_3)}\label{eqn:nu-unimodular-deltacero}\\
 \nu_3^2=\frac{\mu_3^2}{(\mu_3-\mu_1)(\mu_3-\mu_2)}\notag
\end{align}
We can suppose, without loss of generality that $\mu_1<\mu_2<\mu_3$, which gives $\nu_2^2<0$ unless $\mu_2=0$. Moreover, the conditions $\mu_2=0$ and $\mu_1\mu_2+\mu_2\mu_3+\mu_1\mu_3=0$ imply that $\mu_1=0$ or $\mu_3=0$. In any case, it follows that $\beta_1^2+\beta_2^2+\beta_3^2=0$.
\end{itemize}
Let us now suppose that $\Delta\neq0$, so the equation $a_2\eta^2+a_1\eta+a_0=0$ has two solutions. The first one is given by~\eqref{eqn:eta1-unimodular}, so~\eqref{eqn:nu-unimodular-deltacero} is also satisfied. As in the discussion above, $\mu_1<\mu_2<\mu_3$ implies $\mu_2=0$ and $\nu_2=0$. The third equation in~\eqref{eqn:sistema3-unimodular} now yields $\lambda=0$, so $\Sigma$ is totally geodesic. Moreover, from $\mu_2=0$ we get $\mu_1=2c_3$ and $\mu_3=2c_1$, so~\eqref{eqn:nu-unimodular-deltacero} now reads as $\nu_1^2=\frac{-c_3}{c_1-c_3}$, $\nu_2=0$ and $\nu_3^2=\frac{c_1}{c_1-c_3}$. Depending on the choice of signs, a basis of the tangent bundle of $\Sigma$ is given either by $\{\sqrt{c_1}E_1+\sqrt{-c_3}E_3,E_2\}$ or by $\{\sqrt{c_1}E_1-\sqrt{-c_3}E_3,E_2\}$.

The other solution of $a_2\eta^2+a_1\eta+a_0=0$ is given by
\[\eta=\frac{-2(\mu_1^2\mu_2^2+\mu_1^2\mu_2^2+\mu_1^2\mu_2^2-\mu_1\mu_2\mu_3(\mu_1+\mu_2+\mu_3))}{3(\mu_1-\mu_2)(\mu_2-\mu_3)(\mu_3-\mu_1)(\mu_1\mu_2+\mu_2\mu_3+\mu_1\mu_3)^2}.\]
By substituting this value in~\eqref{eqn:parametric-umbilical}, we get
\begin{align}
\nu_1^2&=\frac{\beta_2\beta_3}{(\mu_1-\mu_2)(\mu_1-\mu_3)(\mu_1\mu_2+\mu_2\mu_3+\mu_1\mu_3)^2},\notag\\
 \nu_2^2&=\frac{\beta_1\beta_3}{(\mu_2-\mu_1)(\mu_2-\mu_3)(\mu_1\mu_2+\mu_2\mu_3+\mu_1\mu_3)^2},\label{eqn:nu-unimodular-a}\\
 \nu_3^2&=\frac{\beta_1\beta_2}{(\mu_3-\mu_1)(\mu_3-\mu_1)(\mu_1\mu_2+\mu_2\mu_3+\mu_1\mu_3)^2}.\notag
\end{align}
Since $\mu_1<\mu_2<\mu_3$, we deduce from~\eqref{eqn:beta-unimodular} that $\beta_1\leq 0$ and $\beta_3\geq 0$. The condition $\nu_1^2\geq 0$ in the first equation of~\eqref{eqn:nu-unimodular-a} yields $\beta_2\geq0$ whereas $\nu_3^2\geq0$ in the third one gives $\beta_2\leq0$. Thus, $\beta_2=0$ and $\nu_1=\nu_3=0$. In other words, the surface is orthogonal to the vector field $E_2$. Hence its tangent plane is globally generated by $\{E_1,E_3\}$ and its normal vectof field is $N=E_2$. In view of~\eqref{eqn:nabla-unimodular}, the condition $\nabla_{E_1}N=-\lambda E_1$ gives $\lambda=0$ and $\mu_1=0$. Likewise, the condition $\nabla_{E_3}N=-\lambda E_3$ gives $\mu_3=0$ so $\beta_1=\beta_2=\beta_3=0$, contradicting the hypothesis in the statement.
\end{proof}

As a consequence of Proposition~\ref{prop:lambda-constant-unimodular}, we obtain the following generalization of the non-existence theorem of totally umbilical surfaces in $\mathbb E(\kappa,\tau)$-spaces for $\tau\neq 0$ and $\kappa\neq 4\tau^2$ given in~\cite{ST09} (see also Remark~\ref{rmk:unimodular-examples}). 

\begin{corollary}\label{coro:deltacero}
If $\Delta=0$ and $\beta_1^2+\beta_2^2+\beta_3^2\neq 0$, then $G$ does not admit  totally umbilical surfaces.
\end{corollary}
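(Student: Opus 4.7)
The plan is to argue by contradiction: suppose $\Sigma\subset G$ is a totally umbilical surface with umbilicity function $\lambda$, and combine the pointwise system~\eqref{eqn:sistema3-unimodular} with the factorisation~\eqref{eqn:delta-unimodular} of $\Delta$ to force $\lambda$ to be constant on each connected component of $\Sigma$. Once $\lambda$ is constant, Proposition~\ref{prop:lambda-constant-unimodular} yields $\Delta\neq 0$, contradicting the hypothesis $\Delta=0$. From~\eqref{eqn:delta-unimodular}, the vanishing of $\Delta$ leaves two (possibly overlapping) cases to analyse: (i) two of the $\mu_i$ coincide, or (ii) the $\mu_i$ are pairwise distinct and $\mu_1\mu_2+\mu_2\mu_3+\mu_1\mu_3=0$.

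In case (i), after relabelling suppose $\mu_1=\mu_2$. Direct evaluation of~\eqref{eqn:beta-unimodular} yields $\beta_1=\beta_2=-\tfrac{1}{2}\beta_3=\mu_1^2(\mu_1-\mu_3)$, so the hypothesis $\beta_1^2+\beta_2^2+\beta_3^2\neq 0$ forces $\mu_1\neq 0$ and $\mu_1\neq\mu_3$. The second equation of~\eqref{eqn:sistema3-unimodular} then collapses to $\nu_1^2+\nu_2^2=2\nu_3^2$, which combined with $\nu_1^2+\nu_2^2+\nu_3^2=1$ gives $\nu_3^2\equiv\tfrac{1}{3}$. Substituting into the third equation produces $\lambda^2=-\tfrac{1}{3}\mu_1(\mu_1+2\mu_3)$, a constant (and non-existence is already obtained if this quantity is strictly negative). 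Smoothness together with connectedness then upgrade $\lambda^2$ constant to $\lambda$ itself being constant on each component.

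In case (ii), the $\mu_i$ are pairwise distinct and $\mu_1(\mu_2+\mu_3)=-\mu_2\mu_3$. Observe first that $\mu_1+\mu_2+\mu_3\neq 0$: otherwise the $\mu_i$ would be the three real roots of a cubic of the form $t^3-\mu_1\mu_2\mu_3$, which has at most one real root unless $\mu_1\mu_2\mu_3=0$, in which case all three $\mu_i$ vanish. Using $\mu_1(\mu_2+\mu_3)=-\mu_2\mu_3$ inside~\eqref{eqn:beta-unimodular}, a short manipulation gives $\beta_1=-2\mu_2\mu_3(\mu_1+\mu_2+\mu_3)$ and, by symmetry, $\beta_2=-2\mu_1\mu_3(\mu_1+\mu_2+\mu_3)$ and $\beta_3=-2\mu_1\mu_2(\mu_1+\mu_2+\mu_3)$. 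Equivalently,
\[(\mu_2\mu_3,\,\mu_1\mu_3,\,\mu_1\mu_2)=-\frac{1}{2(\mu_1+\mu_2+\mu_3)}\,(\beta_1,\,\beta_2,\,\beta_3),\]
so the third row of the coefficient matrix in~\eqref{eqn:sistema3-unimodular} is a scalar multiple of the second. The second equation then says this scalar combination vanishes, so the third equation forces $\lambda\equiv 0$; in particular $\lambda$ is constant.

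In either case $\lambda$ is constant on each connected component of $\Sigma$, and Proposition~\ref{prop:lambda-constant-unimodular} immediately produces $\Delta\neq 0$, contradicting $\Delta=0$. The main obstacle is spotting the proportionality in case (ii): it is not apparent from the raw form of~\eqref{eqn:delta-unimodular} that, precisely when the quadratic elementary symmetric function of the $\mu_i$ vanishes, the third row of the umbilicity system becomes a scalar multiple of the second; without this observation, the mere degeneracy of~\eqref{eqn:sistema3-unimodular} would not force $\lambda$ to vanish.
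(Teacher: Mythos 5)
Your argument is correct and follows essentially the same route as the paper's proof: the same two-case split according to the factorisation of $\Delta$, the same observation that under $\mu_1\mu_2+\mu_2\mu_3+\mu_1\mu_3=0$ the second row of~\eqref{eqn:sistema3-unimodular} is $-2(\mu_1+\mu_2+\mu_3)$ times the third, the same verification that $\mu_1+\mu_2+\mu_3\neq 0$ (you via the cubic $t^3-\mu_1\mu_2\mu_3$, the paper via $\mu_1^2+\mu_1\mu_2+\mu_2^2=0$), and the same final appeal to Proposition~\ref{prop:lambda-constant-unimodular}. The only cosmetic divergences are your explicit computation $\nu_3^2=\tfrac13$ and the sign $\lambda^2=-\tfrac13\mu_1(\mu_1+2\mu_3)$ (which appears to be the correct sign, the paper's displayed compatibility condition having a typo), neither of which affects the argument.
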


\begin{proof}
The condition $\Delta=0$ is satisfied because either two of the $\mu_i$ are equal or because $\mu_1\mu_2+\mu_2\mu_3+\mu_1\mu_3=0$ so we will analyse both subcases:
\begin{itemize}
 \item If $\mu_1= \mu_2$, then the linear system~\eqref{eqn:sistema3-unimodular} is compatible if and only if $\lambda^2=\frac{1}{3}\mu_1(\mu_1+2\mu_3)$ so $\lambda$ is constant.
 \item If $\mu_1\mu_2+\mu_1\mu_3+\mu_2\mu_3=0$ and no two of the $\mu_i$ are equal, then the system \eqref{eqn:sistema3-unimodular} is compatible if and only if $\lambda^2(\mu_1+\mu_2+\mu_3)=0$ (to see this, it suffices to realize that, under the condition $\mu_1\mu_2+\mu_1\mu_3+\mu_2\mu_3=0$, the second row in \eqref{eqn:sistema3-unimodular} is equal to $-2(\mu_1+\mu_2+\mu_3)$ times the third one). Finally, we will prove that $\mu_1+\mu_2+\mu_3\neq0$ to conclude that $\lambda=0$. Indeed, $\mu_1+\mu_2+\mu_3=0$ implies $0=\mu_1\mu_2+\mu_3(\mu_1+\mu_2)=\mu_1\mu_2-(\mu_1+\mu_2)^2$, so $\mu_1^2+\mu_1\mu_2+\mu_2^2=0$ from where $\mu_1=\mu_2=\mu_3=0$ and $\beta_1^2+\beta_2^2+\beta_3^2=0$.
\end{itemize}
In both cases, $\lambda$ is constant and $\Delta=0$, contradicting Proposition~\ref{prop:lambda-constant-unimodular}.
\end{proof}

We will now suppose that $\Delta\neq 0$. As the coefficient matrix of the linear system~\eqref{eqn:sistema3-unimodular} is invertible in this case, we can solve for $\nu_1^2,\nu_2^2,\nu_3^2$ and get
\begin{equation}\label{eqn:nu-unimodular-deltanocero}
\begin{aligned}
\nu_1^2&=\frac{(\beta_3-\beta_2)}{\Delta}\lambda^2+\frac{\mu_1^2}{(\mu_1-\mu_2)(\mu_1-\mu_3)},\\
 \nu_2^2&=\frac{(\beta_1-\beta_3)}{\Delta}\lambda^2+\frac{\mu_2^2}{(\mu_2-\mu_1)(\mu_2-\mu_3)},\\
 \nu_3^2&=\frac{(\beta_2-\beta_1)}{\Delta}\lambda^2+\frac{\mu_3^2}{(\mu_3-\mu_1)(\mu_3-\mu_2)}.
\end{aligned}
\end{equation}

\begin{remark}
Plugging~\eqref{eqn:nu-unimodular-deltanocero} into~\eqref{eqn:nablalambda-unimodular}, we obtain
\begin{equation}\label{eqn:lambda-buena}
\|\nabla\lambda\|^2=4\lambda^2\left(a-\lambda^2\right),\quad a=-\frac{\mu_1^2\mu_2^2+\mu_2^2\mu_3^2+\mu_1^2\mu_3^2}{\mu_1\mu_2+\mu_2\mu_3+\mu_1\mu_3}.
\end{equation}
Observe that when $a<0$ (equivalently, when $G$ has positive scalar curvature, see equation~\eqref{eqn:rho-unimodular}), the identity~\eqref{eqn:lambda-buena} guarantees the non-existence of totally umbilical surfaces in $G$. On the other hand, $a=0$ implies $\Delta=0$ so totally umbilical surfaces do not exist for zero scalar curvature, either. Note that the case $\beta_1^2+\beta_2^2+\beta_3^2=0$ is not considered here.
\end{remark}

\begin{theorem}[The unimodular case]\label{thm:unimodular}
Let $G$ be an unimodular metric Lie group with structure constants $c_3\leq c_2\leq c_1$.
\begin{enumerate}
 \item If $c_1=c_2=c_3$ or $c_1=c_2$, $c_3=0$, then $G$ has constant sectional curvature and it is isometric to $\s^3$ or $\R^3$.
 \item If $c_3<0<c_1$ and $c_2=c_1+c_3$, then we distinguish two subcases:
\begin{itemize}
 \item If $c_2\neq 0$, then any totally umbilical surface in $G$ belongs to one of the two families of totally geodesic surfaces described in Proposition~\ref{prop:lambda-constant-unimodular}.
 \item If $c_2=0$, then $G=\Sol$ endowed with a left-invariant metric homothetical to the standard one. Any totally umbilical surface in $G$ is either totally geodesic (see Proposition~\ref{prop:lambda-constant-unimodular}), or it is, up to ambient isometries, the surface described in~\cite{ST09}.
\end{itemize}
 \item In the rest of cases, $G$ does not admit totally umbilical surfaces.
\end{enumerate}
\end{theorem}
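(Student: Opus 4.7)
My plan is to split the analysis according to whether $\beta_1^2+\beta_2^2+\beta_3^2$ vanishes, and, in the nonvanishing case, according to the signs of $\Delta$ and of the scalar curvature $\rho$.

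For item~(1), direct inspection of \eqref{eqn:beta-unimodular} shows that $\beta_1=\beta_2=\beta_3=0$ holds precisely when $\mu_1=\mu_2=\mu_3$ or when two of the $\mu_i$ vanish, i.e.\ $c_1=c_2=c_3$ or $c_1=c_2$, $c_3=0$ (up to permutation). Substituting into the sectional curvature expressions obtained from Lemma~\ref{lemma:R-unimodular} yields constant sectional curvature $c_1^2/4$ in the first case and constant zero curvature in the second, so $G$ is isometric to $\mathbb{S}^3$ (or to $\mathbb{R}^3$ when $c_1=0$) or to $\mathbb{R}^3$, respectively.

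Assuming from now on that $\beta_1^2+\beta_2^2+\beta_3^2\neq 0$, Corollary~\ref{coro:deltacero} handles $\Delta=0$. If $\Delta\neq 0$, the factorisation $\Delta=(\mu_1-\mu_2)(\mu_2-\mu_3)(\mu_3-\mu_1)(\mu_1\mu_2+\mu_2\mu_3+\mu_1\mu_3)$ combined with $\rho=2(\mu_1\mu_2+\mu_2\mu_3+\mu_1\mu_3)$ forces the $\mu_i$ to be pairwise distinct and $\rho\neq 0$. When $\rho>0$, the constant $a$ in \eqref{eqn:lambda-buena} is negative, so $\|\nabla\lambda\|^2=4\lambda^2(a-\lambda^2)$ forces $\lambda\equiv 0$; then Proposition~\ref{prop:lambda-constant-unimodular}(b) gives $c_3<0<c_1$ and $c_2=c_1+c_3$, and substituting $\mu_1=c_3$, $\mu_2=0$, $\mu_3=c_1$ yields $\rho=2c_1c_3<0$, a contradiction. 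So no umbilical surface exists when $\rho>0$.

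The remaining regime is $\Delta\neq 0$, $\rho<0$. If $\lambda$ is constant, Proposition~\ref{prop:lambda-constant-unimodular}(b) produces exactly the hypotheses $c_3<0<c_1$, $c_2=c_1+c_3$ of item~(2) together with the two families of totally geodesic surfaces. Otherwise, on the open set where $\nabla\lambda\neq 0$, I would differentiate the parametric relations \eqref{eqn:nu-unimodular-deltanocero}: each $\nu_i^2$ is an explicit affine function of $\lambda^2$, so $2\nu_i\nabla\nu_i=\gamma_i\lambda\nabla\lambda$ for explicit constants $\gamma_i$. Equating these with the expressions for $\nabla\nu_i$ in Lemma~\ref{lemma:gradiente-angulo-unimodular}, and using the formula for $\nabla\lambda$ in \eqref{eqn:gradiente-unimodular} together with $\sum_i\nu_iE_i^\top=0$ and $\|\nabla\lambda\|^2=4\lambda^2(a-\lambda^2)$, yields an overdetermined linear system in the basis $\{E_1^\top,E_2^\top\}$. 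Careful analysis of its compatibility conditions should force $\mu_2=0$ and $\mu_1+\mu_3=0$, which identifies $G$ with $\Sol$ after rescaling. The residual ODE for $\lambda$ along an integral curve of $\nabla\lambda/\|\nabla\lambda\|$ then integrates to the surface described in~\cite{ST09}, while any other $(c_1,c_2,c_3)$ with $\rho<0$ is incompatible with the system, yielding item~(3). The main obstacle is precisely this last step: extracting the algebraic conclusions $\mu_2=0$ and $\mu_1+\mu_3=0$ from the overdetermined system produced when $\lambda$ is nonconstant. The bookkeeping must exploit the affine dependence of each $\nu_i^2$ on $\lambda^2$ so that the $\Sol$ condition emerges cleanly rather than only as some weaker necessary relation.
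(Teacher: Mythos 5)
Your overall skeleton matches the paper's: item (1) by inspecting when all $\beta_i$ vanish, the degenerate case $\Delta=0$ via Corollary~\ref{coro:deltacero}, the constant-$\lambda$ case via Proposition~\ref{prop:lambda-constant-unimodular}, and the positive scalar curvature case via \eqref{eqn:lambda-buena} (your route there — $a<0$ forces $\lambda\equiv 0$, hence constant, hence Proposition~\ref{prop:lambda-constant-unimodular}(b) gives $\rho=2c_1c_3<0$, a contradiction — is sound and essentially the paper's remark). The problem is the decisive step, which you yourself flag as the ``main obstacle'': proving that a totally umbilical surface with \emph{non-constant} $\lambda$ and $\Delta\neq 0$ forces $\mu_2=0$ and $\mu_1+\mu_3=0$, i.e.\ that $G$ is $\Sol$ up to homothety. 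Everything in items (2) (the restriction when $c_2\neq 0$) and (3) hinges on this, and your proposal only asserts that the compatibility conditions of an overdetermined system ``should'' deliver it. Differentiating the affine relations \eqref{eqn:nu-unimodular-deltanocero} and comparing with Lemma~\ref{lemma:gradiente-angulo-unimodular} and \eqref{eqn:gradiente-unimodular} is not obviously enough: those gradient formulas are precisely the inputs already consumed in deriving \eqref{eqn:nu-unimodular-deltanocero}, so there is a real risk that the resulting identities are either tautological or only yield relations still involving the $\nu_i$, rather than constant-coefficient constraints on the $\mu_i$ alone. As it stands this is a genuine gap, not a routine verification.

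For comparison, the paper closes this gap with a specific device: the tangent vector field $X=\nabla\lambda\wedge N$. Since each $\nu_i^2$ is an affine function of $\lambda^2$ by \eqref{eqn:nu-unimodular-deltanocero} and $X$ annihilates $\lambda$, one gets $dN(X)=0$, so the components of $X$ satisfy the rank-deficient linear system \eqref{eqn:sistema-nivel-unimodular} from the proof of Lemma~\ref{lemma:ecuacion3-unimodular}. Writing those components explicitly ($b_1=\mu_1(\mu_2-\mu_3)\nu_2\nu_3$, etc.), substituting into the first equation of that system, squaring and eliminating the $\nu_i^2$ via \eqref{eqn:nu-unimodular-deltanocero} produces a polynomial identity $c_6\lambda^6+c_4\lambda^4+c_2\lambda^2=0$ whose coefficients depend only on $\mu_1,\mu_2,\mu_3$. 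Non-constancy of $\lambda$ forces $c_2=c_4=c_6=0$, and with $\Delta\neq 0$ the vanishing of $c_2$ forces exactly one $\mu_i=0$ while $c_6=0$ then forces the remaining two to sum to zero — this is where the $\Sol$ conclusion comes from, after which the classification of the non-totally-geodesic examples is quoted from \cite{ST09}. If you want to salvage your plan, you must either carry out your compatibility analysis explicitly and check that it really yields these two equalities (and nothing weaker), or adopt an auxiliary kernel direction for $dN$ such as $X=\nabla\lambda\wedge N$ as above.
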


\begin{proof}
We can suppose that $\beta_1^2+\beta_2^2+\beta_3^2\neq 0$ since, otherwise, we lie in item (1). By Propositions~\ref{prop:lambda-constant-unimodular} and Corollary~\ref{coro:deltacero}, it suffices to show that if $G$ admits a totally umbilical surface $\Sigma$ with a non-constant umbilicity function $\lambda$ (in particular $\Delta\neq0$), then $G$ is the $\Sol$ group with a metric homothetical to the standard one, and the result will follow from~\cite{ST09}.

Let us write $\nabla\lambda=\sum_{i=1}^3a_iE_i$ for some $a_1,a_2,a_3\in C^\infty(\Sigma)$. From equation~\eqref{eqn:gradiente-unimodular}, we can compute
\begin{align*}
 a_1&=(\lambda^2+\mu_2\mu_3)\nu_1,&
 a_2&=(\lambda^2+\mu_1\mu_3)\nu_2,&
 a_3&=(\lambda^2+\mu_1\mu_2)\nu_3.
\end{align*}
where a unit normal vector field $\Sigma$ is $N=\sum_{i=1}^3\nu_iE_i$. We will consider the vector field $X=\nabla\lambda\wedge N$ and write $X=\sum_{i=1}^3 b_iE_i$ for some functions $b_1,b_2,b_3\in C^\infty(\Sigma)$. It follows that
\begin{align}
b_1&=a_2\nu_3-a_3\nu_2={\mu_1(\mu_2-\mu_3)}\nu_2\nu_3,\notag\\
b_2&=a_3\nu_1-a_1\nu_3={\mu_2(\mu_3-\mu_1)}\nu_1\nu_3,\label{eqn:bi-unimodular}\\
b_3&=a_1\nu_2-a_2\nu_1={\mu_3(\mu_1-\mu_2)}\nu_1\nu_2.\notag
\end{align}
From~\eqref{eqn:nu-unimodular-deltanocero} we deduce that $dN_p(X_p)=0$ for all $p\in\Sigma$, where $N=(\nu_1,\nu_2,\nu_3)$ denotes the left-invariant Gauss-map. As in the argument of the proof of Lemma~\ref{lemma:ecuacion3-unimodular}, $(b_1,b_2,b_3)$ satisfies the linear system~\eqref{eqn:sistema-nivel-unimodular}. Substituting~\eqref{eqn:bi-unimodular} in~\eqref{eqn:sistema-nivel-unimodular} and simplifying we get the following three equations:
\begin{align*}
\lambda\nu_2\nu_3&=\frac{\mu_3^2(\mu_1-\mu_2)\nu_2^2+\mu_2^2 (\mu_1-\mu_3) \nu_3^2}{\mu_1(\mu_2-\mu_3)}\nu_1,\\
\lambda\nu_1\nu_3&=\frac{\mu_3^2(\mu_1-\mu_2)\nu_1^2+\mu_1^2 (\mu_3-\mu_2) \nu_3^2}{\mu_2(\mu_1-\mu_3)}\nu_2,\\
\lambda\nu_1\nu_2&=\frac{\mu_2^2(\mu_3-\mu_1)\nu_1^2+\mu_1^2 (\mu_3-\mu_2) \nu_3^2}{\mu_3(\mu_1-\mu_2)}\nu_3.
\end{align*}
We will only make use of the first equation. Squaring both sides and susbtituting~\eqref{eqn:nu-unimodular-deltanocero}, we get that $\lambda$ satisfies a degree $6$ polynomial equation $c_6\lambda^6+c_4\lambda^4+c_2\lambda^2=0$, where the constants $c_2$, $c_4$ and $c_6$ are given by
\begin{align*}
 c_2&=-\mu_1^2\mu_2^2\mu_3^2(\mu_1\mu_2+\mu_2\mu_3+\mu_1\mu_3)^3,\\
 c_4&=\mu_1\mu_2\mu_3\,Q(\mu_1,\mu_2,\mu_3),\\
 c_6&=(\mu_1+\mu_2+\mu_3)\bigl(\mu_1^2\mu_2^2+\mu_2^2\mu_3^2+\mu_1^2\mu_3^2-2\mu_1\mu_2\mu_3(\mu_1+\mu_2+\mu_3)\bigr)\cdot\\
&\quad\quad\cdot\bigl(\mu_1\mu_2(\mu_1-\mu_2)+\mu_1\mu_3(\mu_1-\mu_3)+\mu_2\mu_3(\mu_2+\mu_3)+\mu_1\mu_2\mu_3\bigr),
\end{align*}
and $Q(\mu_1,\mu_2,\mu_3)$ is a certain polynomial expression. In particular, the fact that $\lambda$ is not constant implies that $c_2=c_4=c_6=0$. Since $\Delta\neq 0$, the condition $c_2=0$ implies $\mu_1=0$ or $\mu_2=0$ or $\mu_3=0$, so automatically $c_4=0$. Moreover, no two of the $\mu_i$ vanish simultaneously, since it would also lead to $\Delta=0$. It is easy to check from $c_6=0$ that if one of the $\mu_i$ vanishes then the sum of the other two also vanish, so $G$ is the $\Sol$ group with a left-invariant metric homothetical to the standard one.
\end{proof}

\section{The non-unimodular case}\label{sec:nounimodular}
Let us consider $\Sigma$ a totally umbilical surface in a non-unimodular metric Lie group $G$. Up to a homothety in the metric, $G$ may be considered to be the semidirect product $\R^2\ltimes_{A(a,b)}\R$, where $A(a,b)$ is the $2\times 2$ real matrix given by~\eqref{eqn:A-nonuni} for some $a,b\geq 0$. In this setting, $\{E_1,E_2,E_3\}$ will stand for the left-invariant global orthonormal frame defined in Section~\ref{sec:nouni-intro} and $N=(\nu_1,\nu_2,\nu_3):\Sigma\to\s^2$ will represent the left-invariant Gauss map of $\Sigma$, where $\nu_i=\langle E_i,N\rangle$, $i\in\{1,2,3\}$, are the corresponding angle functions. We will denote by $\lambda\in C^\infty(\Sigma)$ the umbilicity function associated to $\Sigma$. 

Let $\phi:\Omega\subset\R^2\to\Sigma$ be a local parametrization. On the one hand, observe that equation~\eqref{eqn:tensor1} also holds in the non-unimodular case and, on the other hand, we can calculate $R(\phi_u,\phi_v)N$ by using Lemma~\eqref{lemma:R-nouni}. By comparing the terms in $\phi_u$ and $\phi_v$ with those in~\eqref{eqn:tensor1}, we get that the gradient of $\lambda$ is given by
\begin{equation}\label{eqn:nounim-nablalambda}
\nabla\lambda=2a(1+b^2)\bigl((a-1)\nu_1E_1^\top+(a+1)\nu_2E_2^\top\bigr).
\end{equation}

\begin{lemma}\label{lemma:angle-gradients}
The gradients of the angle functions are given by
\begin{align*}
\nabla\nu_1&=((1+a)\nu_3-\lambda)E_1^\top+ab\nu_3E_2^\top+b\nu_2E_3^\top,\\
\nabla\nu_2&=ab\nu_3E_1^\top+((1-a)\nu_3-\lambda)E_2^\top-b\nu_1E_3^\top,\\
\nabla\nu_3&=-((1+a)\nu_1+ab\nu_2)E_1^\top-(ab\nu_1+(1-a)\nu_2)E_2^\top-\lambda E_3^\top.
\end{align*}
\end{lemma}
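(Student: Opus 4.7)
The plan is to adapt the proof of Lemma~\ref{lemma:gradiente-angulo-unimodular} to this non-unimodular setting, replacing the Levi--Civita connection data~\eqref{eqn:nabla-unimodular} by~\eqref{eqn:nabla-nounim}. The starting point is identical: since $\nabla\nu_i$ is tangent to $\Sigma$ and the umbilicity gives $\overline\nabla_X N = -\lambda X$ for every $X\in\X(\Sigma)$, one has
\[
\langle\nabla\nu_i,X\rangle = X(\langle E_i,N\rangle) = \langle\overline\nabla_{X^\top}E_i,N\rangle - \lambda\langle E_i,X^\top\rangle.
\]
The plan is to evaluate this identity on the ambient frame $\{E_1,E_2,E_3\}$ and then assemble the components into a tangent expression.

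Concretely, I would test against $X=E_k$ for $k\in\{1,2,3\}$. Using the decomposition $E_k^\top = \sum_j(\delta_{jk}-\nu_j\nu_k)E_j$, which again satisfies $\langle E_j^\top,E_k\rangle = \delta_{jk}-\nu_j\nu_k$ exactly as in~\eqref{eqn:scalarproduct}, this yields
\[
\langle\nabla\nu_i,E_k\rangle = \sum_{j=1}^3(\delta_{jk}-\nu_j\nu_k)\,\langle\overline\nabla_{E_j}E_i,N\rangle - \lambda(\delta_{ik}-\nu_i\nu_k).
\]
The nine scalars $\langle\overline\nabla_{E_j}E_i,N\rangle = \sum_\ell\nu_\ell\langle\overline\nabla_{E_j}E_i,E_\ell\rangle$ can be read off directly from~\eqref{eqn:nabla-nounim}: for instance $\langle\overline\nabla_{E_1}E_1,N\rangle = (1+a)\nu_3$, $\langle\overline\nabla_{E_2}E_1,N\rangle = ab\nu_3$, $\langle\overline\nabla_{E_3}E_1,N\rangle = b\nu_2$, and so on.

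With the three scalar values $\langle\nabla\nu_i,E_k\rangle$ in hand for each $i$, I would compare them with the claimed tangent expression by contracting with the same $E_k$ and using $\langle E_j^\top,E_k\rangle = \delta_{jk}-\nu_j\nu_k$ once more. The coefficients displayed in the statement are precisely those obtained by substituting the computed values of $\langle\overline\nabla_{E_j}E_i,N\rangle$; the one-parameter ambiguity in expressing a tangent vector on the overdetermined family $\{E_1^\top,E_2^\top,E_3^\top\}$, governed by the relation $\sum_j\nu_jE_j^\top = 0$, is resolved by this natural choice.

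The only real obstacle is bookkeeping: the non-unimodular connection~\eqref{eqn:nabla-nounim} is less symmetric than its unimodular analogue, with both $\overline\nabla_{E_1}E_2$ and $\overline\nabla_{E_2}E_1$ contributing $ab\,E_3$ and with $(1\pm a)$-terms mixed with $b$-terms across several entries, so some care is needed. A convenient sanity check at the end is the identity $\sum_{i=1}^3\nu_i\nabla\nu_i = 0$, which follows from $\sum_i\nu_i^2=1$; a direct substitution of the three claimed formulas reduces this combination to $-\lambda(\nu_1E_1^\top+\nu_2E_2^\top+\nu_3E_3^\top) = 0$, confirming the internal consistency of the expressions.
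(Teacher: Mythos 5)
Your proposal is correct and follows essentially the same route as the paper, whose proof of this lemma is simply the argument of Lemma~\ref{lemma:gradiente-angulo-unimodular} carried out with the connection data~\eqref{eqn:nabla-nounim}: evaluate $\langle\nabla\nu_i,E_k\rangle$ via umbilicity and~\eqref{eqn:scalarproduct}, then read off the coefficients. The contraction check against $E_k$ and the consistency identity $\sum_i\nu_i\nabla\nu_i=0$ are sound.
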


\begin{proof}
Similar to the proof of Lemma~\ref{lemma:gradiente-angulo-unimodular}.
\end{proof}

\begin{lemma}\label{lemma:nounim-equation1}
Let us suppose that $a\neq 0$. Then:
\begin{equation}\label{eqn:nounim-equation1}
\left((a+1)(a+2)\nu_2^2-(a-1)(a-2)\nu_1^2-2a\right)b+2(a^2-1)\nu_1\nu_2=0.
\end{equation}
\end{lemma}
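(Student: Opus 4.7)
The strategy mirrors the unimodular argument that produced Lemma~\ref{lemma:ecuacion2-unimodular}: compute $[E_1^\top,E_2^\top](\lambda)$ in two different ways and equate the results. The assumption $a\neq 0$ ensures, via~\eqref{eqn:nounim-nablalambda}, that $\nabla\lambda$ carries the nonzero scalar factor $2a(1+b^2)$, which should eventually be cancelled to isolate a single constraint on the angle functions.

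First I would compute $[E_1^\top,E_2^\top]$ explicitly. Using the general identity
\[\nabla_{E_i^\top}E_j^\top=\lambda\nu_j E_i^\top+\sum_{k=1}^{3}\langle E_i^\top,E_k^\top\rangle\bigl(\overline\nabla_{E_k}E_j\bigr)^\top,\]
which is derived exactly as in the unimodular case and uses only the umbilicity hypothesis, together with the non-unimodular connection formulas~\eqref{eqn:nabla-nounim} and $\langle E_i^\top,E_k^\top\rangle=\delta_{ik}-\nu_i\nu_k$, I would evaluate $\nabla_{E_1^\top}E_2^\top$ and $\nabla_{E_2^\top}E_1^\top$. Subtracting and simplifying via the tangency relation $\nu_1E_1^\top+\nu_2E_2^\top+\nu_3E_3^\top=0$ should yield an expression of the form
\[[E_1^\top,E_2^\top]=\lambda(\nu_2E_1^\top-\nu_1E_2^\top)+\Phi(\nu_1,\nu_2,\nu_3,a,b)\,E_3^\top,\]
for a scalar $\Phi$ that can be read off from the connection formulas. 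Pairing with $\nabla\lambda$ from~\eqref{eqn:nounim-nablalambda} then gives the first expression for $[E_1^\top,E_2^\top](\lambda)$ as $\langle[E_1^\top,E_2^\top],\nabla\lambda\rangle$.

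For the second computation, I would first expand $E_i^\top(\lambda)=\langle E_i^\top,\nabla\lambda\rangle$ for $i=1,2$ using~\eqref{eqn:nounim-nablalambda} and $\langle E_i^\top,E_j^\top\rangle=\delta_{ij}-\nu_i\nu_j$, producing expressions polynomial in $\nu_1,\nu_2$ with prefactor $2a(1+b^2)$. I would then differentiate these scalars by applying Lemma~\ref{lemma:angle-gradients} to every angle function occurring, and form $E_1^\top(E_2^\top(\lambda))-E_2^\top(E_1^\top(\lambda))$ to obtain the bracket a second time.

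Equating the two expressions yields a polynomial identity in the variables $\nu_1,\nu_2,\nu_3,\lambda,a,b$, and the main obstacle is the algebraic bookkeeping required to reduce it to the stated clean form. I expect the $\lambda$-quadratic contributions to cancel between the two sides (as they do in the unimodular analog~\eqref{eqn:corchete-lambda-unimodular}), leaving a factor of $\nu_3$ times a polynomial in $\nu_1$, $\nu_2$, $a$, $b$. Repeated use of $\nu_1^2+\nu_2^2+\nu_3^2=1$ to eliminate $\nu_3^2$, together with division by the common prefactor $2a(1+b^2)$ (permitted because $a\neq 0$), should then reduce the residual identity to precisely~\eqref{eqn:nounim-equation1}. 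The only genuinely delicate point is controlling the factor $\nu_3$ that appears along the way: at points where $\nu_3\neq 0$ the cancellation is immediate, and the identity extends by continuity to the full surface.
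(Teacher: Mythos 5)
Your computation of $[E_1^\top,E_2^\top](\lambda)$ in two ways is exactly the paper's strategy, and up to the final step the plan is sound: equating the two expressions does produce (after dividing by the nonzero prefactor involving $a\neq 0$) an identity of the form $\nu_3\cdot\bigl(\text{LHS of }\eqref{eqn:nounim-equation1}\bigr)=0$, with the $\lambda$-terms cancelling as you anticipate. The gap is in how you dispose of the factor $\nu_3$. Continuity only propagates the identity from $\{\nu_3\neq 0\}$ to its closure; it says nothing on an open subset of $\Sigma$ where $\nu_3$ vanishes identically, and such subsets genuinely occur for totally umbilical surfaces in these groups (for instance, when $b=0$ the totally geodesic integral surfaces of the distributions $\{E_1,E_3\}$ and $\{E_2,E_3\}$ appearing in Theorem~\ref{thm:nounimodular} have $\nu_3\equiv 0$). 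So as written, your argument does not establish \eqref{eqn:nounim-equation1} on all of $\Sigma$, and since the relation is not an automatic consequence of $\nu_1^2+\nu_2^2=1$, the case $\nu_3\equiv 0$ cannot simply be waved away.

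The paper closes this exact loophole differently: it repeats the same two-ways computation of the bracket applied to $\lambda$ for the pairs $(E_1^\top,E_3^\top)$ and $(E_2^\top,E_3^\top)$. These yield the same bracketed polynomial multiplied by $\nu_2$ and by $\nu_1$, respectively, so one has the three relations $\nu_i\cdot\bigl(\text{LHS of }\eqref{eqn:nounim-equation1}\bigr)=0$ for $i=1,2,3$; since the angle functions never vanish simultaneously, the identity holds at every point of $\Sigma$ with no continuity argument needed. To repair your proof, either carry out these two additional bracket computations, or give a separate argument handling open sets on which $\nu_3\equiv 0$ (e.g., analyzing directly the umbilicity equations for surfaces tangent to $E_3$); the former is the cleaner route and is what the paper does.
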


\begin{proof}
We can argue as in the proof of Lemma~\ref{lemma:ecuacion2-unimodular}. Computations are long but straightforward, so we will omit some of the details. On the one hand,
\begin{align*}
\nabla_{E_1^\top}E_2^\top&=\bigr(b\nu_1\nu_3+\lambda\nu_2\bigl)E_1^\top+\bigl(ab(1-\nu_1^2)-(1-a)\nu_1\nu_2\bigr)E_3^\top,\\
\nabla_{E_2^\top}E_1^\top&=\bigl(-b\nu_2\nu_3+\lambda\nu_1\bigr)E_2^\top+\bigl(ab(1-\nu_2^2)-(1+a)\nu_1\nu_2\bigr)E_3^\top,
\end{align*}
and subtracting $\nabla_{E_2^\top}E_1^\top$ from $\nabla_{E_1^\top}E_2^\top$, we reach
\begin{equation}\label{eqn:nounim-bracket1}
[E_1^\top,E_2^\top]=\lambda\nu_2E_1^\top-\lambda\nu_1E_2^\top+\bigr(ab(\nu_2^2-\nu_1^2)+2a\nu_1\nu_2-b\nu_3^2\bigl)E_3^\top.
\end{equation}
On the other hand, from \eqref{eqn:nounim-nablalambda} we deduce that
\begin{align*}
 E_1^\top(\lambda)&=2a(1+b^2)\bigl((a-1)\nu_1(1-\nu_1^2)-(a+1)\nu_1\nu_2^2\bigr),\\
 E_2^\top(\lambda)&=2a(1+b^2)\bigl((a+1)\nu_2(1-\nu_2^2)-(a-1)\nu_1^2\nu_2\bigr).
\end{align*}
Using these equalities and Lemma~\ref{lemma:angle-gradients} it is easy to compute $E_1^\top(E_2^\top(\lambda))$ and $E_2^\top(E_1^\top(\lambda))$. Subtracting these terms and imposing that the result must be equal to $\langle[E_1^\top,E_2^\top],\nabla\lambda\rangle$, where $[E_1^\top,E_2^\top]$ is given by~\eqref{eqn:nounim-bracket1}, we finally reach the identity
\[a\nu_3\Bigl(\left((a+1)(a+2)\nu_2^2-(a-1)(a-2)\nu_1^2-2a\right)b+2(a^2-1)\nu_1\nu_2\Bigr)=0,\]
where we can simplify the factor $a\neq0$. Moreover, if we repeat the argument above by using the pairs $(E_1^\top,E_3^\top)$ and $(E_2^\top,E_3^\top)$, rather than $(E_1^\top,E_2^\top)$, we obtain, after simplifying by $a\neq 0,$ the following two identities, respectively:
\begin{align*}
\nu_2\Bigl(\left((a+1)(a+2)\nu_2^2-(a-1)(a-2)\nu_1^2-2a\right)b+2(a^2-1)\nu_1\nu_2\Bigr)&=0,\\
\nu_1\Bigl(\left((a+1)(a+2)\nu_2^2-(a-1)(a-2)\nu_1^2-2a\right)b+2(a^2-1)\nu_1\nu_2\Bigr)&=0.
\end{align*}
Since $\nu_1$, $\nu_2$ and $\nu_3$ do not vanish simultaneously, the equality in the statement follows.
\end{proof}

Now observe that $a=0$ implies that the group $G$ is isometric to $\h^3$ since it has constant sectional curvature $-1$. On the other hand, except for the case $a=1,b=0$, which corresponds to the homogeneous product space $\h^2\times\R$,  Lemma~\ref{lemma:nounim-equation1} implies that the image of the left-invariant Gauss map $N=(\nu_1,\nu_2,\nu_3):\Sigma\to\s^2$ is contained in a curve. Note that if $a=1$ and $b$ is arbitrary, then $G$ is isometric to $\E(-4,b)$ so this case can be also ruled out (see Remark~\ref{rmk:nounim-examples}).

\begin{lemma}\label{lemma:nounim-equation2}
If $a\not\in\{0,1\}$, then the following identities hold:
\begin{align}
\lambda ^2-2\nu_3\lambda -a b^2\nu_1^2+a b^2\nu_2^2-a^2(1+b^2)\nu_3^2+\nu_3^2+2 a b \nu_1\nu_2&=0,\label{eqn:nounim-equation2}\\
4b\nu_1^2\nu_2^2+((1-a)\nu_1^2-(1+a)\nu_2^2+2a)ab\nu_3^2&=0.\label{eqn:nounim-equation3}
\end{align}
\end{lemma}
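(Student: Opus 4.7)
The plan is to mimic the strategy of Lemma~\ref{lemma:ecuacion3-unimodular}. Since $a\neq 1$, the coefficient $2(a^2-1)$ in front of $\nu_1\nu_2$ in \eqref{eqn:nounim-equation1} is nonzero, so the algebraic constraint from Lemma~\ref{lemma:nounim-equation1}, combined with $\nu_1^2+\nu_2^2+\nu_3^2=1$, forces the image of the left-invariant Gauss map $N:\Sigma\to\s^2$ to be contained in a one-dimensional subset. In particular, at every $p\in\Sigma$ there exists a nonzero $u\in T_p\Sigma$ with $dN_p(u)=0$.

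Writing $u=\sum_{i=1}^3 b_iE_i$, the condition $dN_p(u)=0$ reads $u(\nu_i)=0$ for every $i$, so
\[
\nabla_u N=\sum_{i=1}^3\nu_i\overline\nabla_u E_i=\sum_{i,j=1}^3\nu_i b_j\overline\nabla_{E_j}E_i,
\]
which expands using \eqref{eqn:nabla-nounim} as an explicit linear combination of $E_1,E_2,E_3$. Equating this with the umbilicity relation $\nabla_u N=-\lambda u$ and appending the orthogonality $\langle u,N\rangle=0$ produces the homogeneous linear system
\[
\begin{pmatrix}
\lambda-(1+a)\nu_3 & -ab\nu_3 & -b\nu_2\\
-ab\nu_3 & \lambda-(1-a)\nu_3 & b\nu_1\\
(1+a)\nu_1+ab\nu_2 & ab\nu_1+(1-a)\nu_2 & \lambda\\
\nu_1 & \nu_2 & \nu_3
\end{pmatrix}
\begin{pmatrix}b_1\\ b_2\\ b_3\end{pmatrix}=\begin{pmatrix}0\\0\\0\\0\end{pmatrix}.
\]
Since $(b_1,b_2,b_3)\neq 0$, the $4\times 3$ coefficient matrix has rank at most $2$, so each of its four $3\times 3$ minors must vanish identically on $\Sigma$. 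Each such minor is cubic in the matrix entries and, as in the proof of Lemma~\ref{lemma:ecuacion3-unimodular}, is expected to factor as a common polynomial in $\lambda,\nu_i,a,b$ multiplied by either some $\nu_i$ or $\lambda$; since $\sum\nu_i^2=1$ prevents the $\nu_i$ from all vanishing simultaneously, these prefactors may be cancelled, producing the scalar identities in the statement.

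The main obstacle is the algebraic bookkeeping needed to extract from these four minors precisely the two identities \eqref{eqn:nounim-equation2} and \eqref{eqn:nounim-equation3} without redundancies. A reasonable path is to compute first a minor that preserves the symmetry in $\lambda$ (e.g.\ the one deleting the orthogonality row) to isolate the quadratic-in-$\lambda$ relation \eqref{eqn:nounim-equation2}, and then use a second minor, together with \eqref{eqn:nounim-equation1}, the normalization $\sum\nu_i^2=1$, and \eqref{eqn:nounim-equation2} to eliminate $\lambda^2$, thereby reducing it to the purely angular identity \eqref{eqn:nounim-equation3}. The remaining two minors should then yield no new information.
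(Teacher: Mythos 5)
Your derivation of the first identity is essentially the paper's: the rank-deficiency of the $4\times 3$ matrix in~\eqref{eqn:sistema-nivel-nounimodular} forces its four $3$-minors to vanish, and this is equivalent to~\eqref{eqn:nounim-equation2}. However, there is a genuine gap in your plan for the second identity. The vanishing of the minors carries no information beyond~\eqref{eqn:nounim-equation2}: as in the unimodular case, each minor is (up to the prefactors $\nu_i$ or $\lambda$) the \emph{same} scalar expression, so ``a second minor, together with~\eqref{eqn:nounim-equation1}, the normalization and~\eqref{eqn:nounim-equation2}'' cannot produce~\eqref{eqn:nounim-equation3}; the remaining minors really do yield nothing new, which undercuts the step you rely on.

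What is missing is the identification of an \emph{explicit} kernel direction, not just the abstract existence of one. Since~\eqref{eqn:nounim-equation2} determines $\lambda$ up to finitely many choices in terms of $N$, the function $\lambda$ is constant along the level curves of the Gauss map, hence $\nabla\lambda$ is orthogonal to them and the vector field
\[X=\frac{\nabla\lambda\wedge N}{2a(1+b^2)}=(1+a)\nu_2\nu_3E_1+(1-a)\nu_1\nu_3E_2-2\nu_1\nu_2E_3\]
is tangent to $\Sigma$ and satisfies $dN_p(X_p)=0$ at every point. Substituting these concrete components $b_1=(1+a)\nu_2\nu_3$, $b_2=(1-a)\nu_1\nu_3$, $b_3=-2\nu_1\nu_2$ into the linear system~\eqref{eqn:sistema-nivel-nounimodular} produces three new scalar equations (genuinely stronger than the vanishing of the minors, because the kernel vector is now known); multiplying the first by $\nu_1$, the second by $\nu_2$, isolating $\lambda\nu_1\nu_2\nu_3$, and using~\eqref{eqn:nounim-equation1} (where $a^2\neq1$ is needed to solve for $\nu_1\nu_2$) to handle the $\nu_1\nu_2\nu_3^2$ terms is what yields~\eqref{eqn:nounim-equation3}. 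Without this step your argument establishes only~\eqref{eqn:nounim-equation2}.
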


\begin{proof}
Since the image of the left-invariant Gauss map $N$ of $\Sigma$ has dimension at most $1$, we can follow the argument in the proof of Lemma~\ref{lemma:ecuacion3-unimodular}. Similarly to the deduction of~\eqref{eqn:sistema-nivel-unimodular}, if we take a direction $u=\sum_{i=1}^3b_iE_i$ tangent to $\Sigma$ at some point $p$, and such that $dN_p(u)=0$, then
\begin{equation}\label{eqn:sistema-nivel-nounimodular}
\left(\begin{matrix}-(1+a)\nu_3+\lambda&-ab\nu_3&-b\nu_2\\-ab\nu_3&-(1-a)\nu_3+\lambda&b\nu_1\\(1+a)\nu_1+ab\nu_2&ab\nu_1+(1-a)\nu_2&\lambda\\\nu_1&\nu_2&\nu_3\end{matrix}\right)
\left(\begin{matrix}b_1\\b_2\\b_3\end{matrix}\right)=\left(\begin{matrix}0\\0\\0\\0\end{matrix}\right).\end{equation}
The minors of order $3$ of this $4\times 3$ real matrix must vanish but this can be shown to happen if and only if equation~\eqref{eqn:nounim-equation2} holds.

In order to obtain~\eqref{eqn:nounim-equation3}, let us consider the vector field
\[X=\frac{\nabla\lambda\wedge N}{2a(1+b^2)}=(1+a)\nu_2\nu_3E_1+(1-a)\nu_1\nu_3E_2-2\nu_1\nu_2E_3,\]
which has been computed by expressing $\nabla\lambda$, given in~\eqref{eqn:nounim-nablalambda}, in terms of the orthonormal basis $\{E_1,E_2,E_3\}$. From~\eqref{eqn:nounim-equation2}, we deduce that $\lambda$ is constant along the curves where $N$ is constant. This means that $\nabla\lambda$ is orthogonal to the level curves of $N$ so $X$ is tangent to $\Sigma$ and $dN_p(X_p)=0$ for all $p\in\Sigma$. Consequently, we get the following three equations by  plugging  $b_1=(1+a)\nu_2\nu_3$, $b_2=(1-a)\nu_1\nu_3$ and $b_3=-2\nu_1\nu_2$ in~\eqref{eqn:sistema-nivel-nounimodular}:
\begin{equation}\label{eqn:nounim-extra}
\begin{aligned}
(a+1)\lambda\nu_2\nu_3&=(a+1)^2\nu_2\nu_3^2-b\nu_1(2\nu_2^2+a(a-1)\nu_3^2),\\
(a-1)\lambda\nu_1\nu_3&=-(a-1)^2\nu_1\nu_3^2-b\nu_2(2\nu_1^2+a(a+1)\nu_3^2),\\
2\lambda\nu_1\nu_2&=(-ab(a-1)\nu_1^2+2(1+a^2)\nu_1\nu_2+ab(a+1)\nu_2^2)\nu_3.
\end{aligned}
\end{equation}
Let us now multiply the first one by $\nu_1$ and the second one by $\nu_2$, and isolate $\lambda\nu_1\nu_2\nu_3$ in both of them. We then use the following substitution in the right-hand-sides of the two resulting identities to transform the terms with $\nu_1\nu_2\nu_3^2$:
\[\nu_1\nu_2=\frac{\left((a+1)(a+2)\nu_2^2-(a-1)(a-2)\nu_1^2-2a\right)b}{2(1-a^2)},\]
This substitution follows from~\eqref{eqn:nounim-equation1} and the fact that $a^2\neq 1$. We finally get~\eqref{eqn:nounim-equation3} by identifying the terms $\lambda\nu_1\nu_2\nu_3$. We remark that using the third equality in~\eqref{eqn:nounim-extra} leads to a equation equivalent to~\eqref{eqn:nounim-equation3}.
\end{proof}

\begin{theorem}[The non-unimodular case]\label{thm:nounimodular}
Let $G=\R^2\ltimes_{A(a,b)}\R$ be the non-unimodular metric Lie group obtained for some constants $a,b\geq 0$.
\begin{enumerate}
 \item If $a=0$, then $G$ is isometric to the hyperbolic space $\h^3$.
 \item If $b=0$ and $a=1$, then $G$ is isometric to the product space $\h^2\times\R$.
 \item If $b=0$ and $a\neq 1$, then any totally umbilical surface in $G$ falls into one of the following cases:
\begin{itemize}
 \item  An integral surface of one of the distributions $\{E_1,E_3\}$ or $\{E_2,E_3\}$ (in particular, totally geodesic).
 \item A surface invariant by a $1$-parameter group of isometries associated to one of the Killing vector fields $\partial_x$ or $\partial_y$. This gives rise to two complete non totally geodesic surfaces, unique up to ambient isometries.
\end{itemize}
 \item In the rest of cases, $G$ does not admit totally umbilical surfaces.
\end{enumerate}
\end{theorem}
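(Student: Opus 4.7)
The plan is to split the analysis by the values of $a$ and $b$. Items~(1) and~(2) follow directly from Remark~\ref{rmk:nounim-examples}, which identifies the corresponding metric Lie groups with $\h^{3}$ and with $\h^{2}(-4)\times\R$, respectively; totally umbilical surfaces in these ambients are classical. Moreover, when $a=1$ and $b>0$ we have $G\cong\E(-4,b)$ with $b\neq 0$, so the non-existence recalled in the introduction (after~\cite{ST09}) disposes of this sub-case as part of item~(4). It therefore remains to treat $a\notin\{0,1\}$, where Lemmas~\ref{lemma:nounim-equation1} and~\ref{lemma:nounim-equation2} both apply, and I would branch according to whether $b=0$ (item~(3)) or $b>0$ (the rest of item~(4)).

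For $b=0$, equation~\eqref{eqn:nounim-equation1} collapses to $2(a^{2}-1)\nu_{1}\nu_{2}=0$, so $\nu_{1}\nu_{2}\equiv 0$ on $\Sigma$ and, by connectedness and continuity, either $\nu_{1}\equiv 0$ or $\nu_{2}\equiv 0$. Since for $b=0$ the frame reduces to $E_{1}=e^{(1+a)z}\partial_{x}$, $E_{2}=e^{(1-a)z}\partial_{y}$, $E_{3}=\partial_{z}$, the vanishing of $\nu_{1}$ (resp.\ $\nu_{2}$) is equivalent to the Killing field $\partial_{x}$ (resp.\ $\partial_{y}$) being tangent to $\Sigma$, so $\Sigma$ is $\partial_{x}$- (resp.\ $\partial_{y}$-) invariant and thus determined by a profile curve in the $(y,z)$- (resp.\ $(x,z)$-) plane. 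Writing the umbilicity condition $\nabla_{X}N=-\lambda X$ for such a parametrization produces a second-order ODE on the profile curve whose constant-in-$z$ solutions give the totally geodesic integral surfaces of $\{E_{1},E_{3}\}$ or $\{E_{2},E_{3}\}$, and whose non-constant solutions form a single complete orbit under the ambient isometry group in each branch, producing the two non-totally-geodesic umbilical surfaces claimed in item~(3).

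For $b>0$ and $a\notin\{0,1\}$, I would combine Lemmas~\ref{lemma:nounim-equation1} and~\ref{lemma:nounim-equation2} with the unit-length relation $\nu_{1}^{2}+\nu_{2}^{2}+\nu_{3}^{2}=1$. The strategy is to solve~\eqref{eqn:nounim-equation1} for $\nu_{1}\nu_{2}$ (which is allowed since $a\neq\pm 1$) and then substitute $(\nu_{1}\nu_{2})^{2}=\nu_{1}^{2}\nu_{2}^{2}$ into~\eqref{eqn:nounim-equation3}, obtaining a polynomial system in $\nu_{1}^{2}$ and $\nu_{2}^{2}$ whose common zero set on $\s^{2}$ is, for every admissible $(a,b)$, a finite set. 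Connectedness of $\Sigma$ then forces the left-invariant Gauss map to be constant, and~\eqref{eqn:nounim-equation2} in turn forces $\lambda$ to be constant. Substituting such a constant $N$ and $\lambda$ into $\nabla_{X}N=-\lambda X$ and expanding by means of~\eqref{eqn:nabla-nounim}, one obtains further algebraic conditions on $(\nu_{1},\nu_{2},\nu_{3},\lambda)$ that turn out to be incompatible with~\eqref{eqn:nounim-equation1} under the hypotheses $b>0$ and $a\notin\{0,1\}$, yielding the announced non-existence. The hardest step of the whole proof is this discreteness claim for the Gauss map: ruling out that the two quadrics defined by~\eqref{eqn:nounim-equation1} and~\eqref{eqn:nounim-equation3} share a one-dimensional component on $\s^{2}$, which amounts to a resultant-type computation whose outcome depends delicately on the standing assumptions $a\notin\{-1,0,1\}$ and $b>0$.
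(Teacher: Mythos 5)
Your decomposition coincides with the paper's (items (1), (2) and the sub-case $a=1$, $b>0$ are dispatched exactly as in the text, via Remark~\ref{rmk:nounim-examples} and the known non-existence in $\E(-4,b)$), but the heart of item (4) for $b>0$ and $a\notin\{0,1\}$ is left unproved: the claim that the common zero set on $\s^2$ of \eqref{eqn:nounim-equation1} and \eqref{eqn:nounim-equation3} is finite is precisely the step you defer to an unspecified ``resultant-type computation''. The paper closes this gap with a concrete argument on the polynomials $P,Q$ of \eqref{eqn:nounim-polynomials}: any nonconstant common factor $R$ has degree at most $2$; since $P(0,y)=0$ has exactly the two roots $y=\pm\sqrt{2a}/\sqrt{a^2+3a+2}$ and $Q(0,\cdot)$ does not vanish there, $R$ has no zeros on the line $x=0$, which excludes degree $2$ ($R$ would then be proportional to $P$) and degree $1$ (the zero line would have to be $x=\mu$ with $\mu\neq0$, forcing $P(\mu,\cdot)\equiv Q(\mu,\cdot)\equiv0$, impossible); hence $R$ is constant and B\'ezout's theorem gives finiteness, so the Gauss map is constant. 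Your endgame after constancy is also only asserted (``turn out to be incompatible''); the paper makes it explicit by applying umbilicity to the tangent fields $\nu_2E_3-\nu_3E_2$ and $\nu_3E_1-\nu_1E_3$, which yields $(a\nu_3^2-\nu_2^2)b=0$ and $(a\nu_3^2+\nu_1^2)b=0$, hence $\nu_1=\nu_2=0$, $\nu_3^2=1$ and $a=0$, a contradiction; that part is routine, but the finiteness step is not, and as written your proof of non-existence for $b>0$ is incomplete.

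In the $b=0$ branch your skeleton is right (and deriving $\nu_1\nu_2=0$ from \eqref{eqn:nounim-equation1} is the correct source), but all the quantitative content of item (3) is asserted rather than established. The statement claims two \emph{complete} non-totally-geodesic surfaces, each \emph{unique up to ambient isometries}, and that they are genuinely two. The paper proves this by reducing the umbilicity system \eqref{ODE} for the profile curve to \eqref{eqn-derivative} and \eqref{eq:final}, showing the maximal solutions are entire, bounded below by $\frac{-1}{2a}\log\Lambda$, unbounded above and attain their minimum, then exhibiting the explicit isometry $(x,y,z)\mapsto(xe^{(1+a)w},ye^{(1-a)w},z+w)$ that identifies the solutions for different $\Lambda$, and finally showing the $x$-invariant and $y$-invariant examples are non-congruent because the stabilizer of the identity consists of $(x,y,z)\mapsto(\pm x,\pm y,z)$ (see \cite[Proposition 2.21]{MPR}). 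None of this appears in your outline. A smaller point: from $\nu_1\nu_2\equiv0$ on a connected surface you cannot conclude ``$\nu_1\equiv0$ or $\nu_2\equiv0$'' by connectedness and continuity alone; the paper sidesteps this by working on open subsets where one angle function vanishes, which is all the classification requires.
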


\begin{proof}
If $a=0,$ we know (see Remark \ref{rmk:nounim-examples}) that $G$ is isometric to $\h^3,$ so we will assume $a\neq 0$. Let us first analyze the case $b\neq 0$. Equations~\ref{eqn:nounim-equation1} and~\ref{eqn:nounim-equation3} ensure that the angle functions $\nu_1,\nu_2,\nu_3$ of $\Sigma$ satisfy $P(\nu_1,\nu_2)=Q(\nu_1,\nu_2)=0$, where $P$ and $Q$ are the polynomials
\begin{equation}\label{eqn:nounim-polynomials}
\begin{aligned}
 P(x,y)&=((a+1)(a+2)y^2-(a-1)(a-2)x^2-2a)b+2(a^2-1)xy,\\
 Q(x,y)&=4bx^2y^2+ab((1-a)x^2-(1+a)y^2+2a)(1-x^2-y^2).
\end{aligned}
\end{equation}
Note that we have substituted $\nu_3^2=1-\nu_1^2-\nu_2^2$ in~\ref{eqn:nounim-equation3} to obtain the expression for $Q(x,y)$. We will suppose that if $P$ and $Q$ have a non-zero common factor $R$, and prove that $R$ is constant, so Bézout's theorem~\cite{Fulton} implies that there are only finitely-many solutions $(x,y)$ of the system $P(x,y)=Q(x,y)=0$. In particular, the angle functions must be constant.

In order to prove it, note that equation $P(0,y)=0$ with unknown $y$ has exactly two solutions, which are given by
\begin{align*}
y_1&=\frac{\sqrt{2a}}{\sqrt{a^2+3a+2}},&y_2&=\frac{-\sqrt{2a}}{\sqrt{a^2+3a+2}}.
\end{align*}
It is easy to compute
\[Q(0,y_1)=Q(0,y_2)=\frac{2a^2b(a^2+a+2)}{(a+2)^2}\neq 0,\]
so $R(0,y)\neq 0$ for all $y\in\R$. We also know that the degree of $R$ is at most $2$. This degree cannot be $2$ since, were it the case, $R$ would be a scalar multiple of $P$ but $P$ has zeroes along the axis $x=0$ and $R$ does not. If the degree of $R$ is one, then $R(x,y)=0$ represents a line in the $(x,y)$-plane so it must be parallel to the axis $x=0$; this means that  $R(x,y)=x-\mu$ for some $\mu\neq 0$. Equivalently, $P(\mu,y)=Q(\mu,y)=0$ for all $y\in\R$ and some $\mu\neq 0$, which is impossible in view of~\eqref{eqn:nounim-polynomials}. 

As pointed out before, this proves that $\nu_1,\nu_2,\nu_3$ are constant. Let us consider the following vector fields, which are tangent to $\Sigma$:
\begin{align*}
X&=\nu_2E_3-\nu_3E_2,&Y&=\nu_3E_1-\nu_1E_3.
\end{align*}
By using the umbilicity of $\Sigma$ and~\eqref{eqn:nabla-nounim}, we get
\begin{equation}\label{eqn:constant-angle}
\begin{aligned}
  0&=\langle\nabla_{X}N,E_1\rangle=(a\nu_3^2-\nu_2^2)b,\\
   0&=\langle\nabla_{Y}N,E_2\rangle=(a\nu_3^2+\nu_1^2)b.
\end{aligned}
\end{equation}
These two equations and the fact that $b\neq 0$ yield $\nu_1=\nu_2=0$ and $\nu_3^2=1$,  so $a=0$,  contradicting our original assumption.

Finally, let us deal with the case $b=0$, so the technique above is no longer valid since $Q$ identically vanishes. We assume $a\neq 1,$ otherwise $G$ is isometric to $\h^2\times \R$ (see Remark \ref{rmk:nounim-examples}). Under these assumptions, we get that $\nu_1\nu_2=0$ from~\eqref{eqn:nounim-equation3}.  Let us work in a open subset of $\Sigma$ where $\nu_1=0$ (the case $\nu_2=0$ is similar and is discussed below). 

In view of Section~\ref{sec:nouni-intro}, the metric of the ambient space is given by 
\[\df s^2= e^{-2(1+a)z}\df x^2 + e^{-2(1-a)z}\df y^2 + \df z^2.\] 
Clearly, the vector fields $\partial_x$ and $\partial_y$ are Killing fields, and the condition $\nu_1=0$ implies that $\partial_x$ is tangent to $\Sigma$, so the surface is invariant by the one parameter group of isometries $(x,y,z) \to (x+c, y, z)$. Discarding the trivial case of a totally geodesic plane $\{y=y_0\}$ (which is an integral surface of the distribution spanned by $\{E_1,E_3\}$), we can assume that a piece of $\gamma$, which we still denote by $\gamma$, is a graph  over the $y$-axis, i.e., it is generated by a curve $\gamma$ in the totally geodesic plane $\{x=0\}$ given by $\gamma(y)=(0,y,z(y))$, and the generated surface is parametrized by
\begin{equation*}
  X(x,y)= (x,y,z(y)).
\end{equation*}
Hence, $X_x= e^{-(1+a)z }E_1$ and $X_y= e^{-(1-a)z} E_2 + z^\prime E_3$, so we can take as a unit normal vector field
 \begin{equation*}
 N= \frac{e^{-az} z^\prime}{D} E_2 - \frac{e^{-z}}{D} E_3,
 \end{equation*}
 where $D^2={e^{-2az} (z')^2 + e^{-2z}}$.  From~\eqref{eqn:nabla-nounim}, it is straightforward to compute
\begin{equation*}
\nabla_{X_x}N= \frac{(1+a) e^{-z}}{D} X_x,
\end{equation*}
so $X$ is a totally umbilical immersion if and only if
\begin{equation}\label{eqn:term1}
\nabla_{X_y}N =  \frac{(1+a) e^z}{D} X_y=\frac{(1+a) e^z}{D} \left(  e^{(1-a)z} E_2 + z^\prime E_3 \right).
\end{equation}
Independently, using \eqref{eqn:nabla-nounim}, we can calculate
\begin{equation}\label{eqn:term2}
\begin{aligned}
\nabla_{X_y}N&= \left((1-a)\frac{e^{-(2-a)z}}{D}+\left(\frac{e^{-az} z^\prime}{D}\right)^\prime\right) E_2\\
&\qquad\qquad+ \left(
(1-a) \frac{z'e^{-z}}{D}-\left(\frac{e^{-z}}{D}\right)'\right) E_3,
\end{aligned}
\end{equation}
Comparing~\eqref{eqn:term1} and~\eqref{eqn:term2}, we deduce that the umbilicity condition is equivalent to the following ODE system:
\begin{align}
\left(\frac{e^{-az} z^\prime}{D}\right)'&=2a\frac{e^{-(2-a)z}}{D},&
\left(\frac{e^{-z}}{D}\right)'&=-2a\frac{z'e^{-z}}{D}.\label{ODE}
\end{align}
Observe that, if $z=z(y)$ is a solution, then the function $z(\pm y+y_0)$ is also a solution for all $y_0\in\R$, so we can restrict to the solutions of~\eqref{ODE} defined on the maximal interval containing $0$ and such that $z'(0)\geq 0$ (note that this change of variable corresponds to an isometry in the ambient space). We claim that the following statements are equivalent:
\begin{itemize}
 \item[(a)] $z$ is a maximal solution of the system~\eqref{ODE}.
 \item[(b)] There exist $\Lambda,\theta\in\R$ with $\Lambda>0$ and $\theta\geq\frac{-1}{2a}\log\Lambda$, such that $z$ is, up to a change of parameter of the form $y\mapsto\pm y+y_0$, the unique solution of the ODE
\begin{equation}\label{eq:final}
z''= (3a-1)\Lambda^2 e^{2(3a-1)z}-(a-1)e^{2(a-1)z},
\end{equation}
with initial conditions $z(0)=\theta$ and $z'(0)=\sqrt{\Lambda^2 e^{2(3a-1)\theta}-e^{2(a-1)\theta}}$.
\end{itemize}
First, if (a) is satisfied, then integrating the second equation in~\eqref{ODE}, we obtain that $D= \Lambda e^{(2a-1)z}$ for some constant $\Lambda>0$. Combining this with the identity $D^2=e^{-2az} (z')^2 + e^{-2z}$ we get
\begin{equation}\label{eqn-derivative}
(z')^2 = \Lambda^2 e^{2(3a-1)z}-e^{2(a-1)z}.
\end{equation}
The fact that the RHS must be positive implies that $\theta=z(0)\geq\frac{-1}{2a}\log\Lambda$, and also $z'(0)^2=\Lambda^2 e^{2(1+a)\theta}-e^{2(1-a)\theta}$. Finally, using that $D= \Lambda e^{(2a-1)z}$ in the first equation of~\eqref{ODE}, it can be easily transformed into \eqref{eq:final}, so assertion (b) is proved. 

Reciprocally, let us suppose that $z$ is a solution of~\eqref{eq:final} for some $\Lambda$ and $\theta$, with initial conditions as in (b). Then, multiplying both sides in~\eqref{eq:final} by $2z'$, integrating, and imposing $z'(0)^2=\Lambda^2 e^{2(3a-1)\theta}-e^{2(a-1)\theta}$, we obtain that $z$ also satisfies~\eqref{eqn-derivative}. From here, we deduce that $D=\Lambda e^{(2a-1)z}$, which, together with~\eqref{eq:final}, implies the two equations in~\eqref{ODE}.

The claim is proved and, in particular, we deduce the following properties of the solutions of~\eqref{ODE}:
\begin{itemize}
 \item They are smooth and defined on the whole real line.
 \item They admit the global lower bound $z\geq\frac{-1}{2a}\log\Lambda$.
 \item They are not bounded from above. Were it not the case, there would be a sequence $\{y_n\}\to\infty$ such that $\lim\{z'(y_n)\}=0$ and converging to the upper bound $M<\infty$, so~\eqref{eqn-derivative} would say that $M=\lim\{z(y_n)\}=\frac{-1}{2a}\log(\Lambda)$, and $z$ would be constant, which is not a solution of~\eqref{ODE}.
 \item There exists $y_0\in\R$ such that $z(y_0)=\frac{-1}{2a}\log\Lambda$. Otherwise, we get that $z$ is strictly monotonic since $z'$ has no zeroes in view of~\eqref{eqn-derivative}. This implies that there is a sequence $\{y_n\}\to\pm\infty$ such that $\lim\{z''(y_n)\}=0$ and $\lim\{z'(y_n)\}=0$. Evaluating~\eqref{eqn-derivative} at $y_n$ and taking limits, we get that $\lim\{z(y_n)\}=\frac{-1}{2a}\log\Lambda$. Likewise, evaluating \eqref{eq:final} at $y_n$ and taking limits implies that $a\not\in[\frac{1}{3},1]$ and $\lim\{z(y_n)\}=\frac{-1}{2a}\log\left(\sqrt{\frac{3a-1}{a-1}}\Lambda\right)$. Both limits can only coincide for $a=0$, so we get a contradiction.
\end{itemize}
This implies that, up to an isometry in the ambient space, for each $\Lambda>0$, the totally umbilical surface can be supposed to be the surface associated to the entire solution $z_\Lambda$ of~\eqref{eq:final} with initial conditions $z(0)=\frac{-1}{2a}\log\Lambda$ and $z'(0)=0$. Moreover, by the uniqueness in terms of initial conditions, we deduce that the function $z_\Lambda$ is even (i.e., $z_\Lambda(-y)=z_\Lambda(y)$ for all $y\in\R$). 

The final step will consist in showing that the constructed surfaces are congruent under an ambient isometry when varying $\Lambda>0$. Given $\Lambda_1,\Lambda_2>0$, let $z_{\Lambda_1}$ and $z_{\Lambda_2}$ be the associated solutions, and consider $w=\frac{1}{2a}(\log\Lambda_1-\log\Lambda_2)$. It is easy to check that the transformation
\[(x,y,z)\mapsto(xe^{(1+a)w},ye^{(1-a)w},z+w)\]
defines an ambient isometry which maps the surface parametrized by $(x,y)\mapsto(x,y,z_{\Lambda_1}(y))$ into that parametrized by $(x,y)\mapsto(x,y,z_{\Lambda_2}(y))$.

Assume now $\nu_2=0$ in an open subset of $\Sigma,$  that is, this open subset is invariant by the isometries $(x,y,z)\to (x,y+c,z).$ By the same argument (exchanging the roles of $x$ and $y$ and replacing $a$ by $-a$), then $\Sigma$ is either part of a totally geodesic plane $\{x=x_0\}$, or is, up to an ambient isometry, generated by a curve $\gamma(x)=(x,0,z(x))$ where $z$ is a solution of some second-order ODE with prescribed initial conditions. 

It remains to check that this surface is not congruent to the first one, invariant in the $x$-direction. Indeed, note that if they were congruent, since the level curves of the umbilicity function are Euclidean lines parallel to the $x$-axis or to the $y$-axis depending on the case, we could left-translate two congruent level curves so that they pass through the identity element. The resulting surfaces would be congruent by an element in the stabilizer of the identity, which consists of the maps $(x,y,z)\mapsto(\pm x,\pm y,z)$, see~\cite[Proposition 2.21]{MPR}. It is clear that these transformations cannot interchange lines parallel to the $x$-axis and lines parallel to the $y$-axis, so we get a contradiction.
\end{proof}

\begin{remark}\label{rmk:family-existence}
 The proof of the case $b=0$ in Theorem~\ref{thm:nounimodular} is inspired by that of the case of $\Sol$ in~\cite{ST09}. It turns out that both cases could have been treated together as the semidirect products $\mathbb{R}^2\ltimes_A\mathbb{R}$, where
\[A=\left(\begin{matrix}1&0\\0&c\end{matrix}\right),\]
for some $c\in\R$. The (unimodular) case $c=-1$ is nothing but $\Sol$ endowed with its canonical metric, and the (non-unimodular) cases $c\neq -1$ are isometric, up to rescaling the metric, to the family of non-unimodular metric Lie groups $\mathbb{R}^2\ltimes_{A(a,b)}\mathbb{R}$ with $b=0$ (see~\cite[Proposition 2.24]{MPR}).

The reason why the two umbilical surfaces constructed in Theorem~\ref{thm:nounimodular} reduce to only one in $\Sol$, up to ambient isometries, is the fact that the ambient space for $c=-1$ also carries the symmetry $(x,y,z)\mapsto(-y,x,-z)$, which swaps the $x$-direction and the $y$-direction.
\end{remark}

\end{document}